\newcommand{\R}{\mathbb{R}}
\newcommand{\cB}{\mathcal{B}}
\newcommand{\cE}{\mathcal{E}}
\newcommand{\cF}{\mathcal{F}}
\newcommand{\cJ}{\mathcal{J}}
\newcommand{\cM}{\mathcal{M}}
\newcommand{\cN}{\mathcal{N}}
\newcommand{\cO}{\mathcal{O}}
\newcommand{\cP}{\mathcal{P}}
\newcommand{\cS}{\mathcal{S}}
\newcommand{\cT}{\mathcal{T}}
\newcommand{\cZ}{\mathcal{Z}}
\let\ker\relax
\DeclareMathOperator{\ker}{Ker}
\DeclareMathOperator{\im}{Im}
\DeclareMathOperator{\supp}{Supp}
\DeclareMathOperator{\End}{End}
\DeclareMathOperator{\wind}{Wind}
\DeclareMathOperator{\cov}{cov}
\DeclareMathOperator{\ind}{ind}
\declaretheoremstyle[notefont=\bfseries,notebraces={}{},%
    headpunct={},postheadspace=1em]{mystyle}
\declaretheorem[style=mystyle,numbered=no,name=Theorem]{thm-hand}
\declaretheorem[style=plain, parent=section]{theorem}
\declaretheorem[sibling=theorem]{definition}
\declaretheorem[sibling=theorem]{lemma}
\declaretheorem[sibling=theorem]{proposition}
\declaretheorem[sibling=theorem, style=remark]{remark}
\declaretheorem[sibling=remark]{example}
\definecolor{amaranth}{rgb}{0.9, 0.17, 0.31} 
\definecolor{carrotorange}{rgb}{0.80, 0.5, 0.01} 
\definecolor{citrine}{rgb}{0.89, 0.82, 0.04} 
\definecolor{dartmouthgreen}{rgb}{0.05, 0.5, 0.06} 
\definecolor{ballblue}{rgb}{0.13, 0.67, 0.8} 
\definecolor{ceruleanblue}{rgb}{0.16, 0.32, 0.75} 
\definecolor{amethyst}{rgb}{0.6, 0.4, 0.8} 
\definecolor{amber}{rgb}{1.0, 0.75, 0.0} 
\definecolor{burlywood}{rgb}{0.87, 0.72, 0.53} 
\begin{document}


\title{A vanishing theorem in Siefring's intersection theory}
\author[N\, Manikandan]{Naageswaran Manikandan}
\address{Max Planck Institute for Mathematics in Bonn}
\email{manikandan@mpim-bonn.mpg.de}


\date{} 

\keywords{Pseudoholomorphic curves, Siefring's intersection theory, symplectic topology}

\begin{abstract}
In 2009, R. Siefring introduced a homotopy-invariant generalized intersection number and singularity index for punctured pseudoholomorphic curves, by adding contributions from the curve's asymptotic behavior to the standard intersection number and singularity index.
In this article, we provide a stratification of the moduli space that describes the rate of asymptotic convergence of the pseudoholomorphic curves. 
Using this stratification, we provide a more intricate characterization of the curves for which these added contributions to the intersection number and singularity index vanishes. In doing so, we prove that the asymptotic contribution to intersection number and singularity index vanishes under generic perturbations. This means that in generic situations we only need to consider the usual intersections of the curves.
\end{abstract}

\makeatletter
\@namedef{subjclassname@2020}{%
  \textup{2020} Mathematics Subject Classification}
\makeatother

\subjclass[2020]{57K33, 57K43}

\maketitle

\section{Introduction}

The theory of pseudoholomorphic curves with punctures, initially introduced by H.~Hofer in \cite{Hofer_WeinsteinConjecture}, has become a crucial tool in symplectic topology. The local characteristics of punctured pseudoholomorphic curves are similar to those of closed curves. For instance, positivity of intersection, as discussed in \cite{McDuff_Positivity}, extends to punctured pseudoholomorphic curves. However, extending global estimates or inequalities to punctured pseudoholomorphic curves is challenging due to the subtlety and complexity involved. This includes not only extending the global estimates themselves but also adapting the definitions of the relevant quantities to punctured pseudoholomorphic curves. Issues arise particularly when these curves possess multiple ends converging to coverings of the same orbit or when examining self-intersection scenarios where an end approaches a multiple cover of an orbit.\\

R.~Siefring extensively studied such situations and defined `asymptotic intersection indices' in \cite{Siefinter2009}. These indices consider the complexities arising from multiple ends and covers, playing an important role in defining generalized intersection number and singularity index for punctured pseudoholomorphic curves. The generalized intersection number and singularity index are defined by adding suitably the contributions from the asymptotic indices to the usual intersection and singularity index. Notably, these asymptotic intersection indices rely solely on the behavior of the pseudoholomorphic curves in the vicinity of the punctures, hence the prefix `asymptotic'.\\

In this paper, we formulate and prove several theorems which, although known to experts and regarded as part of the mathematical folklore, have not previously appeared with proofs in the literature. We construct a stratification of the moduli space according to the asymptotic behavior of its elements, and use this structure to show that the asymptotic contributions to the intersection number and the singularity index vanish for generic choices of almost complex structures used in defining the moduli space. The expectation for such a result originates from Hutching's earlier works, such as \cite{Hutching_Indexinequality, Hutching_ECHindex}, which predate Siefring's formalization of intersection theory of punctured pseudoholomorphic curves in \cite{Siefinter2009}. This theorem can be perceived as an extension of generic vanishing of certain asymptotic contributions to the sum relative intersection pairing $Q_{\tau}$ and asymptotic writhe $w_{\tau}$ in Embedded Contact Homology (ECH).

Given a contact 3-manifold $(M,\xi=\ker(\alpha))$, the symplectization is the symplectic manifold $(\mathbb{R}\times M, d(e^r{\cdot}\alpha))$. Denote the moduli space of simple punctured pseudoholomorphic curves mapping into this symplectization from a fixed genus $g$ smooth surface, asymptotic to a fixed set of non-degenerate Reeb orbits in a fixed homology class by $\mathcal{M}^*(M,J)$. Now, we present a simplified version of the main theorem, omitting details. For precise definitions of the moduli space of simple curves and complete statements of the theorems, see Section~\ref{Moduli space of pseudoholomorphic curves} and Theorems~\ref{maintheorem1}, \ref{maintheorem2}, \ref{maintheorem3}, and \ref{maintheorem4}.

\begin{theorem}
There is a comeagre subset $\cJ_{\mathrm{reg}}$ of almost complex structures such that for every $J \in \cJ_{\mathrm{reg}}$ the subspace $\cM_{vanish}$ of $\cM^*(M,J)$ consisting of those curves $\tilde{u}$ whose asymptotic contribution to the singularity index $\mathrm{sing}(\tilde{u})$ vanishes contains an an open dense subset $\cO \subset \cM_{vanish}$.
\end{theorem}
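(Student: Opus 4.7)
The plan is to combine the stratification of $\cM^*(M,J)$ by asymptotic type (supplied by the earlier sections of the paper) with a Sard--Smale transversality argument on a universal moduli space with asymptotic constraints. The guiding principle is that the asymptotic contribution to $\mathrm{sing}(\tilde u)$ is determined by the leading asymptotic eigenfunctions at pairs of ends that converge to coverings of a common Reeb orbit, and it vanishes precisely when those leading data sit in \emph{general position}: each end realises its nominal (slowest) decay rate, and no two ends that limit to covers of the same orbit share a leading eigenfunction related by the covering action. I would first make this characterisation explicit by translating the vanishing of the asymptotic contribution, as computed by Siefring, into membership in a specific stratum $\cS_{\mathrm{top}}$ of the stratification constructed earlier.

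Openness of $\cS_{\mathrm{top}}\cap \cM^*(M,J)$ is then the easy half: the leading asymptotic eigenvalue of each end and the associated coefficient of its leading eigenfunction depend continuously (in fact smoothly) on $\tilde u$ varying inside $\cM^*(M,J)$, since the exponential decay estimates near the punctures are stable under $C^\infty_{\mathrm{loc}}$ perturbations. Being in general position is a finite collection of open non-vanishing conditions on these continuously varying data, hence $\cS_{\mathrm{top}}\cap\cM^*(M,J)$ is open.

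Density is the substantive step. I would form the universal moduli space $\cM^*_{\mathrm{univ}}=\{(J,\tilde u):\tilde u\in\cM^*(M,J)\}$ and equip it with the stratification inherited from that of each fibre. For each non-top stratum $\cS_k\subset\cM^*_{\mathrm{univ}}$, encoded by a choice of end, a jump to a faster-decay eigenvalue, or a coincidence of leading eigenfunctions between two ends hitting a common orbit cover, I would verify that $\cS_k$ is a Banach submanifold and compute that the projection $\pi_k:\cS_k\to\cJ$ is Fredholm of strictly negative index, the index drop being precisely the codimension of the asymptotic constraint defining $\cS_k$. Sard--Smale then yields a comeagre set $\cJ_k\subset\cJ$ of regular values for which $\pi_k^{-1}(J)=\varnothing$. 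Because there are countably many such strata (indexed by the combinatorial data of punctures, orbit covers, and eigenvalue jumps), intersecting the $\cJ_k$ produces a comeagre subset of $\cJ$ on which $\cM^*(M,J)=\cS_{\mathrm{top}}\cap\cM^*(M,J)$ on an open dense subset, establishing the claim together with openness from the previous paragraph.

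The main obstacle, as usual in puncture-adapted transversality, is showing that $J$-perturbations in arbitrarily small neighbourhoods of Reeb orbits can independently vary the leading asymptotic coefficients of distinct ends of a simple curve. Concretely I must prove that the linearised evaluation map sending $(\eta, Y)\mapsto$ (tuple of leading eigenfunction coefficients), where $\eta$ is a tangent vector to $\cM^*_{\mathrm{univ}}$ and $Y=\delta J$, is surjective at any point of a non-top stratum. For ends at distinct Reeb orbits this is immediate by localisation of $J$ near each orbit; for several ends of a simple curve converging to a common orbit one must use the simplicity of $\tilde u$ to separate the ends via the unique continuation techniques standard in this setting, combined with the explicit asymptotic formulae whose first-order variations are linked to $Y$ through the normal Cauchy--Riemann operator. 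Once this surjectivity is in place, the remaining ingredients are packaging (parametric transversality, Taubes-type reduction to an open set of $J$'s supported near the image of $\tilde u$) and the Baire intersection over the countable list of strata.
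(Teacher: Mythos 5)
Your high-level strategy --- stratify $\cM^*(M,J)$ by asymptotic decay rate using evaluation maps into eigenspaces of the asymptotic operators, establish transversality on a universal moduli space over $\cJ$, and conclude via Sard--Smale and Baire intersection --- is essentially the route the paper takes in Sections~\ref{section: Stratification of the moduli space} and~\ref{section: Subsets of positive asymptotic intersection index}, culminating in Theorem~\ref{maintheorem3}. However, there are two concrete problems with the execution as you have written it.

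First, the claim that the projection $\pi_k\colon\cS_k\to\cJ$ is Fredholm of \emph{strictly negative} index, so that $\pi_k^{-1}(J)=\varnothing$ for generic $J$, is false in general. The index of the projection from the full universal moduli space to $\cJ$ equals $\ind(\tilde u)$, a fixed non-negative integer determined by the Conley--Zehnder/homology data, and the asymptotic constraint lowers it by its codimension ($1$ or $2$ per eigenvalue jump). Thus $\ind(\pi_k)=\ind(\tilde u)-\mathrm{codim}(\cS_k)$, which is positive whenever $\ind(\tilde u)$ exceeds the constraint codimension. What Sard--Smale actually gives (and what the paper proves) is that, for $J$ in a comeagre set, $\cS_k\cap\cM^*(M,J)$ is a submanifold of \emph{positive codimension} in $\cM^*(M,J)$ --- not an empty set. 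Its complement is open and dense, which is what the theorem asserts; your own final sentence already says ``on an open dense subset,'' so this is an internal inconsistency rather than a route to a false conclusion, but the index claim as stated cannot be used.

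Second, and more seriously, you propose obtaining surjectivity of the linearized evaluation map by localizing the $J$-perturbation ``in arbitrarily small neighbourhoods of Reeb orbits.'' This is precisely what one cannot do in this framework: the evaluation maps $e_{\lambda,z}$ land in eigenspaces $E_{\lambda,z}$ of the asymptotic operator $A_{\gamma_z^{m_z},J}$, which \emph{depends on $J$ along the orbit}. Perturbing $J$ near the orbit changes the asymptotic operators, hence the very targets of the evaluation maps, so the universal moduli space stratification would no longer be well defined over the family of perturbed $J$'s. The paper handles this by insisting that the open set $U$ on which $J$ is perturbed has complement containing the relevant Reeb orbits (see the discussion preceding Theorem~\ref{stratification}); surjectivity is then obtained not by perturbing near the ends but by the Hahn--Banach/similarity-principle argument of Theorem~\ref{regularity of evaluation map}: exhibit a local section near the puncture whose $D_{\tilde u}$-image decays faster than $\lambda$ (Lemma~\ref{local_existence}), and use the injective point of the simple curve inside $U$, together with elliptic regularity and unique continuation for the formal adjoint, to show that $D_{\tilde u}$ plus the $Y\mapsto \tfrac12 Y\circ d\tilde u\circ j$ perturbation term is onto the weighted Sobolev space. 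Your invocation of unique continuation and simplicity is the right ingredient, but it enters the argument in a different place than you indicate, and the spatial localization of $Y$ you describe would break the construction.
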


\begin{theorem}
There is a comeagre subset $\cJ_{\mathrm{reg}}$ of almost complex structures such that for every $J \in \cJ_{\mathrm{reg}}$ the subspace $\cM_{vanish}$ of $\cM^*(M,J) \times \cM^*(M,J)$ consisting of the curves $(\tilde{u},\tilde{v})$ whose asymptotic contribution to the intersection $\tilde{u}*\tilde{v}$ vanishes contains an an open dense subset $\cO \subset \cM_{vanish}$.
\end{theorem} 

A corresponding theorem applies to simple curves into completions of symplectic cobordisms. Although the theorems above are stated specifically for contact 3-manifolds, the precise statements in Theorems~\ref{maintheorem1}, \ref{maintheorem2}, \ref{maintheorem3}, and \ref{maintheorem4} are formulated, more generally, for stable Hamiltonian structures on 3-manifolds, see Section~\ref{subsec: Stable Hamiltonian structures}.\\

There are several evident and straightforward implications of such a theorem. For instance, the adjunction formula provided in \cite[Theorem~4.6]{Siefinter2009} undergoes significant simplification, allowing the substitution of the generalised intersection number and the singularity index with their conventional counterparts for generic almost complex structures.\\

An important application of the ideas presented here is the development of an SFT-like theory that focuses exclusively on curves $u$ with vanishing generalized self-intersection number, i.e., $u * u = 0$. These curves were studied in detail by J.~Fish and R.~Siefring \cite{Fish_Siefring_FEF1}, who examined their index behavior in detail. The central motivation is to demonstrate that such curves generate a sub-$IBL_\infty$ algebra within the broader SFT algebra framework established in \cite{CFL_homotopyIBL}. This perspective is supported by the way the SFT compactness theorem \cite{SFTCompactness} interacts with the self-intersection number. The vanishing theorem established in this work is expected to play a crucial role in simplifying the moduli space well-definition problem, effectively reducing it to scenarios that have been extensively studied in embedded contact homology (ECH). Ultimately, the goal is for this theory to extend ECH by incorporating more intricate, higher-order algebraic structures.\\

The key step in establishing the result is to construct a stratification of the moduli space of simple punctured pseudoholomorphic curves organized according to the asymptotic decay of the curves, as described in Section~\ref{section: Stratification of the moduli space}. Once this stratification is achieved, it facilitates a comprehensive description of the subset of curves whose asymptotic index vanishes, as detailed in Section~\ref{section: Subsets of positive asymptotic intersection index}.
\subsection*{Outline}
In Section~\ref{section: background on Reeb dynamics}, we review the definitions and properties of stable Hamiltonian structures, Reeb orbits, and the associated asymptotic operators. In Section~\ref{section: pseudoholomorphic curves}, we introduce the moduli space of punctured pseudoholomorphic curves, which plays a key role in the formulation of the main theorem. Additionally, we provide a brief overview of the general construction, highlighting the essential elements required for the proof. In Section~\ref{section: intersection theory}, we present an overview of the algebraic formulation of Siefring's intersection theory. The new contributions begin in Section~\ref{section: Stratification of the moduli space}, where we introduce a stratification of the moduli space to characterize the asymptotic rate of convergence. Finally, in Section~\ref{section: Subsets of positive asymptotic intersection index}, we present the proofs of the main theorems.
\subsection*{Acknowledgement}
I began this work during my master's studies, and a portion of the results were previously presented in the Master thesis \cite{MasterThesis} submitted to Humboldt Universit\"at zu Berlin. I would like to thank my advisor Prof. Chris Wendl for proposing the research problem and engaging in numerous valuable discussions. I was funded by the Deutsche Forschungsgemeinschaft (DFG, German Research Foundation) under Germany's Excellence Strategy – The Berlin Mathematics Research Center MATH+ (EXC-2046/1, project ID: 390685689). I would like to thank the Max-Planck Institute for Mathematics in Bonn for its hospitality and financial support. I would like to also thank Gerard B. Gómez, Marc Kegel and Richard Siefring for reviewing earlier drafts of the paper, which significantly improved its readability. I would like to thank the reviewers for their careful reading of the manuscript and for their valuable suggestions, which have significantly improved the readability of the paper.

\section{Background on Reeb dynamics} \label{section: background on Reeb dynamics}
In this section, we present a slight generalization of contact structures on 3-manifolds called the stable Hamiltonian structures and review properties of asymptotic differential operators associated to the Reeb orbits of the stable Hamiltonian structures. These properties are extensively used when studying finite energy punctured pseudoholomorphic curves. Standard references for these concepts include \cite{SFTwendl} and \cite{Siefinter2009}.

\subsection{Stable Hamiltonian structures}\label{subsec: Stable Hamiltonian structures}
\begin{definition}\label{hamiltonian_structure}
Let $M$ be a closed oriented 3-manifold, \textit{a stable Hamiltonian structure} on $M$ is a pair $H = (\lambda, \omega)$ where $\lambda$ is a 1-form and $\omega$ is a 2-form on $M$ such that 
\begin{enumerate} 
    \itemsep0.2em
    \item $\lambda \wedge \omega$ is a volume form on $M$,
    \item $\omega$ is closed and 
    \item $\ker(\omega)\subset \ker(d\lambda)$.
\end{enumerate}
\end{definition}

From the definition, it follows that $\omega$ must have a rank of 2 everywhere and non-degenerate on hyperplane distribution defined by $\xi^{H} \coloneqq \ker \lambda$. It also determines a line bundle defined by $$l_{\omega} = \bigcup\limits_{p\in M} (p, \ker \omega_p).$$
Consequently, a stable Hamiltonian structure $H = (\lambda, \omega)$ leads to a splitting
\begin{equation*}\label{splitting_hamiltonian}
TM = l_{\omega} \oplus \xi^H
\end{equation*}
of the tangent bundle of $M$ into a trivial line bundle $l_{\omega}$ and a symplectic vector bundle $(\xi^{H}, \omega)$. Now, we define the \textit{Reeb vector field} $X_H$ as the unique section of $l_{\omega}$ satisfying $\lambda(X_{H}) = 1$.

\begin{example}
Given a 3-manifold $M$ and a contact structure $\xi=\ker (\alpha)$, the pair $(\alpha,d\alpha)$ defines a stable Hamiltonian structure on $M$. 
\end{example}

\begin{example}\cite{HZ-Bookdynamics}\label{example: stabilizing vector field}
Given a compact hypersurface $M$ in a symplectic manifold $(W,\omega)$, a transverse vector field $Z$, called the \textit{stabilizing vector field}, on a neighborhood of $M$ satisfying
$$\ker \left((\Phi^t_Z)^*\omega|_{TM}\right)=\ker (\omega|_{TM}) $$
determines a stable Hamiltonian structure, where $\Phi^t_Z$ is the flow determined by the vector field $Z$. The pair $(\lambda:=i_{Z}\omega,\omega)$ restricted to $TM$ determines a stable Hamiltonian structure.
\end{example}

The periodic orbits of $X_H$, called the Reeb orbits, play a role similar to the Reeb orbits in contact manifolds. There is a natural self-adjoint first order differential operator associated to each Reeb orbit called the asymptotic operator whose spectral properties prove essential when giving explicit description of the pseudoholomorphic curves near the punctures. In the next subsection, we will review the definition and some properties of these operators.

\subsection{Asymptotic operators}\label{sec: Asymptotic operator}

Given $\tau>0$, a map $\gamma \in C^{\infty}(S^1 , M)$ is called a $\tau$-periodic Reeb orbit if $\gamma$ satisfies the equation $$d\gamma(t)(\partial_t) = \tau \cdot X_H (\gamma(t))$$ for all $t \in S^1$. Given a symmetric connection $\nabla$ and a compatible complex structure $J$ on the symplectic vector bundle $(\xi^{H},\omega)$, the differential operator 

\[\begin{array}{cccc}
      A_{\gamma,J}\colon & C^{\infty} (\gamma^*\xi^H) &\rightarrow &C^{\infty} (\gamma^*\xi^H)\\
       & \eta &\mapsto &-J(\nabla_t \eta - \tau \nabla_{\eta} X_H) \\
\end{array}\]
associated to each $\tau$-periodic Reeb orbit $\gamma$ is called the \textit{asymptotic operator} associated to $\gamma$. The asymptotic operator $A_{\gamma,J}$ is independent of the choice of the symmetric connection and is symmetric with respect to the $L^2$-inner product on $C^{\infty}(\gamma^*\xi^H)$ given by 
$$(\eta_1,\eta_2)\coloneqq \int_{S^{1}}\omega\left(\eta_1(t),J\eta_2(t)\right) dt.$$

Given a simple $\tau$-periodic orbit $\gamma$, we define the multiply covered periodic orbit of multiplicity $m>0$, denoted by $\gamma^m$, by $\gamma^m(t) \coloneq \gamma(mt)$. 

Given a compatible complex structure $J$ on $(\xi^{H},\omega)$, the unitary line bundle $(\gamma^{*}\xi^{H},\omega,J)$ over $S^1$ is trivializable. For multiply covered orbits, we always choose trivializations of $(\gamma^{m})^{*}\xi^{H}$ obtained as pullbacks of trivializations of $\gamma^{*}\xi^{H}$ under the usual covering map.

A $\tau$-periodic orbit $\gamma$ is called \textit{non-degenerate}  if the associated asymptotic operator has a trivial kernel, i.e., $\ker(A_{\gamma,J})=0$. A stable Hamiltonian structure $(\lambda,\omega)$ on $M$ is said to non-degenerate if all periodic orbits are non-degenerate.\\

For functional analytic purposes we consider the extension of asymptotic operators to $L^2(\gamma^*\xi^{H})$ with a dense domain $H^1(\gamma^*\xi^{H})$, which makes them unbounded self-adjoint operators. The following lemma provides a comprehensive description of their spectrum and associated winding numbers, a crucial aspect in defining the generalized intersection of punctured pseudoholomorphic curves.
\begin{lemma} \cite[Section~3]{HWZ2}\label{asymptotic_dimension}
Let $\gamma$ be a simple periodic Reeb orbit, let $T(\gamma^* \xi^H)$ denote the homotopy classes of unitary trivializations of $(\gamma^* \xi^H , \omega, J)$ and let $A_{\gamma^m,J}$ be the asymptotic operator associated to $\gamma^m$ for $m>0$. Then there exists a map $w\colon \sigma(A_{\gamma^m ,J}) \times T(\gamma^* \xi^H ) \rightarrow \mathbb{R}$ which satisfies

\begin{enumerate}[topsep=5pt,itemsep=5pt]
    \item If $v \in H^1((\gamma^m)^*\xi^{H})$ is an eigenvector of $A_{\gamma^m,J} (v)$ with eigenvalue $\lambda$, i.e., $A_{\gamma^m,J} (v) = \lambda \cdot v$, then $w(\lambda, [\Phi]) = \wind(\Phi^{-1} v)$.
    \item The map $w(\lambda, [\Phi])$ is non-decreasing in eigenvalues.
    \item If $m(\lambda) = \dim \ker(A_{\gamma^m,J} - \lambda)$ denotes the multiplicity of $\lambda$ as an eigenvalue we have for every $k \in \mathbb{Z}$ and $[\Phi] \in T(\gamma^* \xi^H)$ that $$\sum_{\{\lambda \mid w(\lambda,[\Phi])\}=k} m(\lambda) = 2.$$

\end{enumerate}
\end{lemma}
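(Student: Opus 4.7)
The plan is to trivialize the bundle so that the asymptotic operator becomes a concrete first-order ODE operator on loops in $\C$, use ODE uniqueness to make sense of the winding number of eigenvectors, and then establish monotonicity and the counting formula via a spectral flow / homotopy argument.

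First I would fix a representative $\Phi$ of $[\Phi] \in T(\gamma^*\xi^H)$ and pull it back under the $m$-fold cover to trivialize $(\gamma^m)^*\xi^H$. Together with $J$, this identifies $(\gamma^m)^*\xi^H$ with $(\R/m\Z)\times\C$ and puts $A_{\gamma^m,J}$ in the canonical form $L = -J_0\partial_t - S(t)$, with $S\colon \R/m\Z \to \Sym(2,\R)$ smooth. Standard elliptic theory then yields that $L$ has compact resolvent, hence discrete real spectrum accumulating only at $\pm\infty$ with finite-dimensional eigenspaces. Any nonzero eigenvector $v$ solves a linear first-order ODE, so by uniqueness of solutions it vanishes nowhere; consequently $\Phi^{-1}v\colon S^1 \to \C^*$ has a well-defined winding number, and I would define $w(\lambda,[\Phi]) := \wind(\Phi^{-1}v)$. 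For well-definedness, observe that if $v_1, v_2 \in \ker(A_{\gamma^m,J}-\lambda)$ are $\R$-linearly independent, then every real combination $(1-s)v_1 + s v_2$ is again an eigenvector and hence nowhere zero, giving a homotopy of never-vanishing loops and therefore $\wind(\Phi^{-1}v_1) = \wind(\Phi^{-1}v_2)$. This establishes (1).

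For monotonicity (2), I would analyze the spectral flow of the path $s \mapsto L - sI$, $s\in\R$. A normalized eigenvector $v$ at a crossing contributes crossing form $\tfrac{d}{ds}\langle (L-sI)v,v\rangle = -1$, so all crossings share the same sign. A Sturm-type comparison applied to the underlying ODE $\dot x = -J_0(S(t)-s)x$ in $\R^2$ shows that as $s$ increases the solutions rotate faster around the origin, which yields the (non-strict) monotonicity of the winding number in the eigenvalue.

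The step I expect to be the main obstacle is the identity (3). The plan is to homotope $L$ to the constant-coefficient model $L_0 = -J_0\partial_t$ through $L_r = -J_0\partial_t - rS(t)$, $r\in[0,1]$. A direct Fourier computation shows that $L_0$ has spectrum $\tfrac{2\pi}{m}\Z$, with each eigenvalue $\tfrac{2\pi k}{m}$ of real multiplicity $2$ and eigenspace spanned by $e^{2\pi ikt/m}$ and $ie^{2\pi ikt/m}$, both of winding $k$, so (3) holds for $L_0$. To transfer the count to $L$ I would track the spectral data along the homotopy: eigenvalues can move and collide, but the definite sign of the crossing form together with the non-strict monotonicity from (2) prevents any net transfer of multiplicity between distinct winding levels. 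Encoding this cleanly via the fundamental solution of the ODE in $\mathrm{Sp}(2,\R)$ and a continuous selection of eigenvectors along $r \in [0,1]$ then transfers the winding-stratified total multiplicity from $L_0$ to $L$, proving (3).
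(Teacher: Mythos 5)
The paper does not prove this lemma; it cites it directly from \cite[Section~3]{HWZ2} as background, so there is no in-paper argument to compare against. Your sketch is, however, essentially the Hofer--Wysocki--Zehnder line of attack: trivialize to put $A_{\gamma^m,J}$ in the form $-J_0\partial_t - S(t)$, use ODE uniqueness to give eigenvectors a winding, prove well-definedness by a linear homotopy inside each eigenspace, get monotonicity from a comparison argument for the angular equation, and establish the count by homotoping $S$ to $0$.

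Two of your steps are stated more loosely than the actual argument requires. For monotonicity, the comparison for $\dot\theta = \lambda + \langle S(t)u(\theta),u(\theta)\rangle$ is indeed the engine, but it compares solutions \emph{with the same initial angle}; eigenvectors at different eigenvalues generally start at different angles, so one has to pass through the rotation number of the fundamental symplectic path $\Phi_\lambda(t)$ (or use that the variation of $\theta(m)-\theta(0)$ over initial conditions is bounded by $\pi$) to convert the pointwise comparison into monotonicity of the integer winding. For part (3), the definite sign of the crossing form for $s\mapsto L-sI$ only re-expresses self-adjointness and is not the point; what actually prevents transfer between winding levels is that the winding is an integer-valued, locally constant function along continuous eigenvalue/eigenvector branches of $r\mapsto L_r$, so colliding branches automatically share the same winding. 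One also needs to control eigenvalues escaping to $\pm\infty$ over the compact homotopy; this is standard but should be said. With those adjustments your outline matches the proof in \cite{HWZ2}.
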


Following the definition of the winding number, we will define several quantities  using the description of the spectrum given above. Given a simple periodic orbit $\gamma$, a unitary trivialization $\Phi \in T(\gamma^* \xi^H)$, we define:

\begin{enumerate}
    \itemsep0.4em
    \item $\sigma_{\max}^{-}(\gamma^m)$ as the largest negative eigenvalue, i.e., $$\sigma_{\max}^{-}(\gamma^m)\coloneqq \max(\sigma(A_{\gamma^m,J}) \cap \mathbb{R}^{-}),$$ 
    \item $\alpha^{\Phi}(\gamma^m)$ as the winding number of the largest negative eigenvalue $\sigma_{\max}^{-}(\gamma^m)$, i.e., $$\alpha^{\Phi}(\gamma^m)\coloneqq \wind (\sigma_{\max}^{-}(\gamma^m); \Phi),$$ 
    \item $\overline{\sigma}(\gamma^m)$ as the covering number of the largest negative eigenvalue, i.e., $$\overline{\sigma}(\gamma^m )=\cov(\sigma_{\max}^{-}(\gamma^m)) = \gcd(m, \alpha^{\Phi}(\gamma^m)).$$ The covering number $\overline{\sigma}(\gamma^m)$ does not depend on the trivialization.
    \item the parity $p(\gamma^m)$ of $\gamma^m$ by
    \[  p(\gamma^m)= \left\{
    \begin{array}{ll}
        0 & \mathrm{if\ }\exists \mu \in \sigma(A_{\gamma^m,J}) \cap \mathbb{R}^{+} \mathrm{\ with\ } \wind (\mu, \Phi) = \alpha^{\Phi}(\gamma^m)\\
        1 & \mathrm{otherwise.} \\
    \end{array} 
\right. \]
    \item the Conley--Zehnder index $\mu^{\Phi}(\gamma^m)$ of $\gamma^m$ by
$$\mu^{\Phi}(\gamma^m)=2\alpha^{\Phi}(\gamma^m)+p(\gamma^m).$$
\end{enumerate}

We will at times suppress the choice of trivialization in our notation for $\alpha$ or $\mu$, but it should always be understood that a choice of trivialization is necessary to define these quantities. Even though the asymptotic operator $A_{\gamma^m,J}$ depends on a choice of $J$ the Conley--Zehnder index of an orbit is independent of this choice. This definition of the Conley--Zehnder index is suitable only for stable Hamiltonian structures on 3-manifolds. A suitable generalisation of stable Hamiltonian structures to higher dimension can be found in the book \cite[Section~6.1]{SFTwendl} and the generalisation of the Conley--Zehnder index Reeb orbits in higher dimensional Hamiltonian structures can be found in \cite[Section~3.4]{SFTwendl}. 

\section{Background on pseudoholomorphic curves} \label{section: pseudoholomorphic curves}
In this section, we recall the definitions of symplectization and completed symplectic cobordisms of stable Hamiltonian structures and finite energy pseudoholomorphic curves into them. The primary reference for this discussion is \cite{SFTwendl}.

\subsection{Pseudoholomorphic curves in symplectization} \label{subsection: Pseudoholomorphic curves in symplectization}
Given a contact manifold $(M,\xi=\ker(\alpha))$, the symplectization is the symplectic manifold $(\mathbb{R}\times M, d(e^r{\cdot}\alpha))$. When $M$ is equipped with a stable Hamiltonian structure, then $\mathbb{R}\times M$ does not have a canonical symplectic structure but a family of symplectic structures as described below.\\

Fix $\epsilon>0$ small and define
$$\cT :=\{ \phi \in C^{\infty}(\mathbb{R},(-\epsilon,\epsilon))\mid \phi' >0\}.$$

If $\epsilon >0$ is small enough, then $\omega+ d(r {\cdot} \alpha)$ is symplectic on $(-\epsilon,\epsilon)\times M$. When this symplectic form is pulled back via the level preserving embedding $\mathbb{R}\times M \rightarrow (-\epsilon,\epsilon)\times M$ determined by $\phi \in \cT$ gives rise to a symplectic form $$\omega_{\phi} = \omega + d (\phi(r){\cdot} \alpha)$$ on $\mathbb{R}\times M$. Now, we need to define a suitable class of tame and compatible almost complex structures on $(\mathbb{R}\times M,\omega_{\phi})$ as the typical definitions do not consider the natural symmetries of tangent bundle of $\mathbb{R}\times M$. A compatible almost complex structures $J$ on the symplectic vector bundle $(\xi^H , \omega)$ can be extended to an $\mathbb{R}$-invariant almost complex structure $\tilde{J}$ on $\mathbb{R}\times M$ by requiring
$$\tilde{J}(\partial_r) = X_H$$ 
$$\tilde{J}(X_H)=-\partial_r$$
where $r$ is the parameter along $\mathbb{R}$. Such an $\mathbb{R}$-invariant almost complex structure on $\mathbb{R}\times M$ is called the standard \textit{cylindrical almost complex structure}. The space of standard cylindrical almost complex structures will also be denoted by $\cJ(M,H)$. If $\epsilon > 0$ is sufficiently small, it is routine to verify that for any triple $(\mathbb{R}\times M, \omega_{\phi}, \tilde{J})$, the form $\omega_{\phi}$ tames $\tilde{J}$ for every $J \in \mathcal{J}(M,H)$ and $\phi \in \mathcal{T}$.

\begin{definition}
Given a connected Riemann surface $(\Sigma,j)$, a finite set $\Gamma \subset \Sigma$ and $J \in \cJ(M,H)$, a smooth map
$$\tilde{u} = (a, u)\colon (\Sigma \setminus \Gamma,j) \rightarrow (\mathbb{R} \times M,\tilde{J})$$
is called punctured pseudoholomorphic curve if the differential $d\tilde{u}$ is a complex linear map with respect to $j$ and cylindrical almost complex structure $\tilde{J}$, i.e.,
$$\tilde{J} \circ d\tilde{u} = d\tilde{u}\circ j \left(\iff d\tilde{u} + \tilde{J} \circ d \tilde{u} \circ j = 0\right).$$ 
\end{definition}

\begin{definition}
The energy of a punctured pseudoholomorphic curve $\tilde{u}\colon (\Sigma \setminus \Gamma,j) \rightarrow (\mathbb{R} \times M,\tilde{J})$  is defined as
$$E\coloneqq \sup_{\phi \in \cT} \int_{{\Sigma\setminus \Gamma}} u^*\omega_{\phi}.$$
\end{definition}

As the triple $(\mathbb{R}\times M, \omega_{\phi},\tilde{J})$ is taming for every  $J\in \cJ(M,H)$ and $\phi \in \cT$, the energy is always
non-negative, and is strictly positive unless the curve is constant. Similar to other Floer-type theories, restricting to finite energy curves yields the following three possibilities near each puncture  $z_0\in \Gamma$.

\begin{enumerate}
\itemsep0.4em
\item Removable punctures: The map $\tilde{u} = (a, u)$ is bounded near $z_0$, in which case $\tilde{u}$ admits a smooth, $\tilde{J}$-holomorphic extension over the puncture.
    
\item Positive punctures: The function $a$ is bounded from below near $z_0$ but not from above. In this scenario, we have the following:
\vspace{0.4em}
\begin{enumerate}
    \itemsep0.4em
    \item There exists a non-degenerate $\tau$-periodic Reeb orbit $\gamma$,
    \item there exists an embedding $\psi\colon [R, \infty) \times S^1 \rightarrow \Sigma \setminus \Gamma$ into a punctured neighborhood of $z_0$, with the property that $\lim\limits_{s\rightarrow \infty} \psi(s, t) = z_0$, and
    \item a map $U\colon [R, \infty) \times S^1 \rightarrow \gamma^* \xi^H$, where $U(s, t) \in \xi_{\gamma(t)}^H$ for all $(s, t) \in [R, \infty) \times S^1$ satisfying the following conditions.
    \vspace{0.4em}
    \begin{enumerate}
    \itemsep0.4em
        \item The equation $\tilde{u}(\psi(s, t)) = (\tau s, \exp_{\gamma(t)} U(s, t))$ holds and
        \item the limit $U(s, \cdot) \rightarrow 0$ as $s\rightarrow \infty$ in the $C^\infty$ topology.
    \end{enumerate}
\end{enumerate}

\item  Negative punctures: The function $a$ is bounded from above near $z_0$ but not from below. In this scenario, we have the following:
\vspace{0.4em}
\begin{enumerate}
    \itemsep0.4em
    \item There exists a non-degenerate $\tau$-periodic Reeb orbit $\gamma$,
    \item there exists an embedding $\psi\colon (-\infty,-R] \times S^1 \rightarrow \Sigma \setminus \Gamma$ into a punctured neighborhood of $z_0$, with the property that $\lim\limits_{s\rightarrow -\infty} \psi(s, t) = z_0$, and
    \item a map $U\colon (-\infty,-R] \times S^1 \rightarrow \gamma^* \xi^H$, where $U(s, t) \in \xi_{\gamma(t)}^H$ for all $(s, t) \in (-\infty,-R] \times S^1$ satisfying the following conditions.
    \vspace{0.4em}
    \begin{enumerate}
    \itemsep0.4em
        \item The equation $\tilde{u}(\psi(s, t)) = (\tau s, \exp_{\gamma(t)} U(s, t))$ holds and
        \item the limit $U(s, \cdot) \rightarrow 0$ as $s\rightarrow -\infty$ in the $C^\infty$ topology.
    \end{enumerate}
\end{enumerate}
\end{enumerate}
Where $\exp$ denotes the exponential map of the metric $$g_{H,J} \coloneqq \lambda \otimes \lambda + \omega(\cdot, J{\cdot})$$ on $M$. We will call a pair $(U, \psi)$ satisfying the conditions above an \textit{asymptotic representative} of $[\Sigma, j, z_0 , \tilde{u}]$ and it is clear from the $C^{\infty}$ convergence of $\tilde{u}$ uniquely determine $(U, \psi)$ up to restriction of the domain and an `asymptotic decoration', which essentially provides a specific parametrization for the Reeb orbit $\gamma$.\\

We will henceforth assume that all punctured pseudoholomorphic curves have finite energy and removable punctures have been removed.
Thus that all punctures are either positive or negative punctures at which the curves in question are asymptotic to Reeb orbits.
Hence the terms `finite energy' and `asymptotically cylindrical' pseudoholomorphic curves are used as synonyms.

\subsection{Pseudoholomorphic curves in  completed cobordisms}
Consider a symplectic cobordism denoted as $(W, \omega)$, which connects $(M^-, H^-)$ to $(M^+, H^+)$, meaning that $\partial W = M^+ - M^-$. Within this definition, we have the existence of a stabilizing vector field $Z$ that directs inward at $M^-$ and outward at $M^+$, inducing the given stable Hamiltonian structures on $M^{\pm}$, see Example~\ref{example: stabilizing vector field}.\\

For a sufficiently small $\epsilon > 0$, we define  
$$\cT_0 \coloneqq \{ \phi \in C^{\infty}(\mathbb{R}, (-\epsilon,\epsilon) \mid \phi ' >0 \mathrm{\ and\ } \phi(r)=r \mathrm{\ for\ } r \mathrm{\ near\ } 0 \}$$ and construct a completed cobordism $\widehat{W}$ by attaching symplectic ends $(-\infty, 0] \times M^-$ and $[0, \infty) \times M^+$ to the negative and positive ends, respectively, for every $\phi \in \cT_0$.\\

According to the symplectic neighborhood theorem (see \cite[Section~6.2]{SFTwendl}), the symplectic structure defined on $(-\infty, 0] \times M^-$ and $[0, \infty) \times M^+$ in Section~\ref{subsection: Pseudoholomorphic curves in symplectization} for every $\phi \in \cT_0$ can be patched together with $\omega$ on $W$ to form a coherent global structure $\omega_{\phi}$ on the completion. We define the set $\cJ_{\tau}(\widehat{W}, \omega_{\phi})$ as the collection of $\omega_{\phi}$-tamed almost complex structures on $\widehat{W}$ that coincide with an element of $\cJ(M^+,H^+)$ (resp. $\cJ(M^-,H^-)$) on the regions $[0, \infty) \times M^+$ (resp. $(-\infty, 0] \times M^-$). We denote the subspace of $\omega_{\phi}$-compatible almost complex structures satisfying these constraints as $\cJ(\widehat{W},\omega_{\phi})$.\\

A notion of energy for pseudoholomorphic curves into completed cobordisms can be defined. Much like in the case of symplectization, pseudoholomorphic curves with finite energy exhibit a characteristic feature: each non-removable puncture corresponds to either a positive or negative puncture, where the curve is positively or negatively asymptotic to trivial cylinders over periodic orbits.

\subsection{Moduli space of pseudoholomorphic curves} \label{Moduli space of pseudoholomorphic curves}
In this section, we give a quick overview of the moduli space of punctured pseudoholomorphic curves into completed cobordisms and symplectizations.\\
\vspace{1pt}

We consider two asymptotically cylindrical pseudoholomorphic curves equivalent if they are related to each other by biholomorphic maps of their domains that preserve the order of punctures. These  equivalence classes are called unparametrized pseudoholomorphic curves. \\
When we refer to moduli spaces, we will always mean the space of unparametrized pseudoholomorphic curves. The topology on this space is such that a sequence  is considered to converge if and only if one can find parametrizations with a fixed punctured domain $\Sigma\setminus \Gamma$ such that the complex structures on $\Sigma$ converge in $C^{\infty}$ while the pseudoholomorphic maps $\Sigma\setminus \Gamma \rightarrow \widehat{W}$ converge in $C^{\infty}$ on compact subsets and in $C^0$ near the cylindrical ends with respect to a translation-invariant metric on the
ends.\\

Within this moduli space, there exist multiple components, each having different dimensions. In order to produce a useful description , we must restrict to a subset of pseudoholomorphic curves over a fixed genus $g$ smooth surface, asymptotic to a fixed set of non-degenerate Reeb orbits $\gamma_{\pm}=\{\gamma_z\}_{z \in \Gamma_{\pm}}$ in $(M^{\pm},H^{\pm})$ in a fixed homology class $A\in H_2(W,\gamma_+\cup \gamma_-)$. We denote this subset of the moduli space simply by $\cM_g(\widehat{W},J)$, keeping in mind the rest of the data needed to define this. This moduli space could still be disconnected, but under appropriate transversality conditions, it is possible to provide a dimension formula.\\

The moduli space $\cM_g(\widehat{W},J)$ can be interpreted abstractly as the zero set of a section $\cS$ of the Banach space vector bundle $\cE^{k-1,p,\delta} \rightarrow T \times \cB^{k,p,\delta}$, for sufficiently small $\delta$. Here, $\cB^{k,p,\delta}\coloneqq W^{k,p,\delta}(\dot{\Sigma},\widehat{W})$ denotes the Banach manifold of continuous maps from a punctured genus $g$ surface $\dot{\Sigma}:=\Sigma\setminus \Gamma$ to $\widehat{W}$ converging to Reeb orbits $\gamma_{\pm}$ exponentially fast (at least of order $\delta$) near the punctures and whose $k$-th derivatives are of class $L^p$ and $T$ denotes a sufficiently small neighbourhood in the Teichm\"uller space $\cT$ punctured genus $g$ surface $\dot{\Sigma}$. The tangent space of $\cB^{k,p,\delta}$ at $\tilde{u} \in \cB^{k,p,\delta}$ being
$$T_{\tilde{u}}\cB^{k,p,\delta} = W^{k,p,\delta}(\dot{\Sigma},\tilde{u}^*T\widehat{W}) \oplus V_{\Gamma}$$
where $V_{\Gamma}\subset \Gamma(\tilde{u}^*T\widehat{W})$ represents a non-canonical vector space of dimension $2|\Gamma|$, which accounts for automorphisms in $\mathbb{R} \times S^1$.\\

The space $\cE^{k-1,p,\delta}$ denotes the vector bundle whose fibre at $(j,\tilde{u})\in T \times \cB^{k,p,\delta}$ is \\
$W^{k-1,p,\delta}(\dot{\Sigma}, \bigwedge^{0,1} T^*\dot{\Sigma}\otimes \tilde{u}^*T\widehat{W})$ and the section
$ \cS\colon T \times \cB^{k,p,\delta} \rightarrow \cE^{k-1,p,\delta}$ is given by 
$$(j, \tilde{u}) \mapsto (\tilde{u}, \overline{\partial}_{J,j}(\tilde{u}))$$
where $\overline{\partial}_{J,j}$ is the Cauchy--Riemman operator $\overline{\partial}_{J,j}(\tilde{u})=d\tilde{u} + \tilde{J} \circ d \tilde{u} \circ j$. It suffices to consider exponentially decaying functions, since every pseudoholomorphic curve decays exponentially near the puncture \cite{HWZ1,HWZ2,HWZ3}; see also Theorem~\ref{asymp_rep1}. For further details on exponentially weighted moduli spaces, see \cite[Section~7.2]{SFTwendl}. The section $\cS$ is Fredholm of index 
$$\ind(\tilde{u})=(n-3)\chi(\dot{\Sigma})+2c_1(A,\tau)+\sum_{z\in \Gamma_{+}}\mu_{CZ}^{\tau}(\gamma_z)-\sum_{\gamma \in \Gamma_{-}}\mu_{CZ}^{\tau}(\gamma_z).$$

If $\cS$ is transverse to the zero-section, then the moduli space is a smooth orbifold of above dimension. Heuristically, we're dealing with an orbifold rather than a simple manifold due to two primary reasons: firstly, $\cM_g(\widehat{W},J)$ contains curves with non-trivial automorphisms. Secondly, the moduli space of complex structures on a smooth genus $g$ surface is an orbifold obtained as a quotient of the Teichm\"uller space.\\

Transversality does not hold in general, we can achieve it by either by considering only a subset of pseudoholomorphic curves or by introducing suitable perturbations of almost complex structures on $\widehat{W}$ or both. A curve $\tilde{u}$ is said to be \textit{Fredholm regular} if $\cS$ intersects the zero section transversely at $\tilde{u}$: in this case a neighborhood of $\tilde{u}$ is a smooth orbifold of dimension $\ind(\tilde{u})$. Thus the space 
$$\cM_g^{\mathrm{reg}}(\widehat{W},J) \subset \cM_g(\widehat{W},J)$$ consisting of Fredholm regular curves is a smooth orbifold of dimension $\ind(\tilde{u})$.\\

We have outlined the structure of the moduli space of pseudoholomorphic curves into completed cobordisms. This discussion can be extended to symplectizations, while keeping the description of the moduli space and the dimension formula unchanged. Finally, we define the moduli spaces essential for presenting the folklore theorem mentioned in the introduction.\\

Given an open subset $O \subset W$ of the cobordism, we define the moduli space $\cM^*(\widehat{W},J)$ as the subset of curves $\tilde{u} \in \cM_g(\widehat{W},J)$ that have an injective point mapped into $O$, i.e,
\begin{equation*}
\cM^*(\widehat{W},J;O) = \cM^*(\widehat{W},J)\coloneqq \left\{ \tilde{u}\in \cM_g(\widehat{W},J) \, \middle|
\begin{array}{c}
\exists \mathrm{\ an\ injective\ point\ } z\in \Sigma\setminus \Gamma \mathrm{\ such \ that\ }\\ \tilde{u}(z) \in O
\end{array}
\right\}.
\end{equation*}
\vspace{1pt}

We can obtain the transversality for curves in $\cM^*(\widehat{W},J)$ by suitably perturbing the almost complex structure. Thus for a generic choice of almost complex structure in $\cJ(\widehat{W},\omega_{\phi})$, the moduli space $\cM^*(\widehat{W},J)$ is a smooth manifold of dimension $\ind(\tilde{u})$.\\

Similarly, given an open subset $O \subset M$, we define the moduli space $\cM^*(M,J)$ as
 
\begin{equation*}
\cM^*(M,J;O) = \cM^*(M,J)\coloneqq \left\{ 
\tilde{u}\in \cM_g(\mathbb{R}\times M,J) \,\middle|\,
\begin{array}{c}
\exists \, \text{an injective point } z \in \Sigma \setminus \Gamma \text{ such that} \\
\tilde{u}(z) \in O \text{ and } \pi_{\xi^H} \circ d\tilde{u}(z) \neq 0
\end{array}
\right\}.
\end{equation*}
\vspace{1pt}

We can obtain the transversality for curves in $\cM^*(M,J)$ for a generic choice of cylindrical almost complex structure. Thus for a generic choice of almost complex structure, the moduli space $\cM^*(M,J)$ is also a smooth manifold of dimension $\ind(\tilde{u})$. We have suppressed the choice of the open subset $O$ in our notation for the moduli spaces $\cM^*(M,J)$ and $\cM^*(\widehat{W},J)$. However, it's crucial to remember that a choice of $O$ is necessary for defining these moduli spaces.

\section{Background on Siefring's intersection theory} \label{section: intersection theory}
In this section, we present a quick overview of Siefring's intersection theory of pseudoholomorphic curves \cite{Siefinter2009}. We will present an algebraic formulation based on the expansion of the asymptotic representative, as in Theorem~\ref{asymp_rep1}, instead of the geometric approach, as this is employed in later sections to prove the vanishing theorem, see Remark~\ref{rem:GeometricDecrpition}.

\subsection{Local Intersection theory}

In this subsection, we will introduce quantities that describe the behavior of pseudoholomorphic curves near the punctures. To accomplish this, we first need the following definition:\\

A pseudoholomorphic end model is a quadruple $(S,j,\tilde{u},z)$ where $(S,j)$ is a Riemann surface (not necessarily closed) without boundary, $z\in S$ is a point and $\tilde{u}:S \setminus \{z\} \rightarrow \mathbb{R}\times M$ is an asymptotically cylindrical pseudoholomorphic map. We say two such end models $(S,j,\tilde{u},z)$ and $(S',j',\tilde{v},w)$ are equivalent if there exists an open neighborhood  $\cZ\subset S$ containing $z$, and a holomorphic embedding $\psi : \cZ \rightarrow S'$ with $\psi(z)=w$ so that $\tilde{u}=\tilde{v}\circ \psi$ on $\cZ \setminus \{z\}$. An equivalence class $[S,j,\tilde{u},z]$ of pseudoholomorphic end models will be referred to as a \textit{pseudoholomorphic end}. We occasionally refer to a pseudoholomorphic end simply as $[\tilde{u}; z]$, as the domain does not play a significant role in the subsequent discussions. Given a pseudoholomorphic end denoted by $[\tilde{u}; z]$, we define its $m$-fold multiple cover as $m \cdot [\tilde{u}; z]$.\\

The following notation is fixed for the rest of this section: the pseudoholomorphic end $[S,j,\tilde{u},z]$ (resp. $[S',j',\tilde{v},w]$) is positively asymptotic to a non-degenerate Reeb orbit $\gamma_z^{m_z}$ (resp. $\gamma_w^{m_w}$) with asymptotic representative $(U, \psi)$ (resp $(V, \phi)$). Here, the curves \(\gamma_z\) and \(\gamma_w\) are assumed to be simple periodic Reeb orbits. Although we have not formally defined an asymptotic representative associated with a pseudoholomorphic end, the extension of the definition is straightforward. The definitions and the results given in this section also apply to a pair of pseudoholomorphic curves that are negatively asymptotic to a cylinder with minor modification, see \cite{Siefinter2009,Wendlcontact3mani2020}.\\

The self intersection problem alluded to in the opening paragraph of the paper, which arises when an end approaches a multiple cover of an orbit and when multiple ends of the same curve approaches the same orbit can be characterized using asymptotic self intersection indices $\delta_{\infty}(\tilde{u}; z)$ and $\delta_{\infty}([\tilde{u}; z], [\tilde{v}; w])$ respectively. The purpose of this section is to describe the features of these indices without getting into the details of defining these indices. These quantities are defined using the expansion of the asymptotic representative described below.\\

\begin{theorem}\label{asymp_rep1}\cite[Theorem~2.2]{Siefinter2008}
Given $\gamma:=\gamma_z^{m_z}=\gamma_w^{m_w}$, the difference of the asymptotic representatives $(U,\phi)$ and $(V,\psi)$ can be written as
$$U(s, t)- V(s, t) = e ^{\lambda s} \left[e(t) + r(s, t)\right]$$
where $e$ is an eigenvector of the asymptotic operator $A_{\gamma,J}$ with eigenvalue $\lambda < 0$ and $r$ satisfies
$$|\nabla^i_s \nabla^j_t r(s, t)| \leq M_{ij} e^{-ds}$$
for all $(s, t) \in [R, \infty ) \times S^1 , (i, j) \in \mathbb{N}^{2}$ , and some appropriate constants $d > 0$ , $M_{ij} > 0$.
\end{theorem}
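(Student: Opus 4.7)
The plan is to reduce the statement to an abstract asymptotic analysis for solutions of a linear ODE on a half-cylinder with an operator-valued coefficient converging to $A_{\gamma,J}$. Set $\eta(s,t) := U(s,t) - V(s,t) \in \gamma^*\xi^H$. Written in the trivializing cylindrical coordinates used to define the asymptotic representatives, both $U$ and $V$ satisfy a nonlinear Cauchy--Riemann-type equation of the form
$$\partial_s W + J(\gamma(t))\partial_t W + A_{\gamma,J} W + N(s,t,W,\partial_t W) = 0,$$
where $N$ is a smooth nonlinearity vanishing to second order at $W=0$. Subtracting the equations for $U$ and $V$ and applying the fundamental theorem of calculus to $N$ gives a linear equation for $\eta$ of the form
$$\partial_s \eta + A_{\gamma,J}\eta + F(s,t)\eta + G(s,t)\partial_t\eta = 0,$$
where the coefficient operators $F,G$ decay exponentially in $s$ because both $U$ and $V$ tend to zero in $C^\infty$ at an exponential rate, via the standard asymptotic decay estimates of Hofer--Wysocki--Zehnder.

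Next, I would view this as an ODE $\eta'(s) = -\bigl(A_{\gamma,J} + B(s)\bigr)\eta(s)$ in the Hilbert space $L^2(S^1,\gamma^*\xi^H)$, where $B(s)$ is a bounded family of operators whose norm is $O(e^{-\delta s})$ for some $\delta > 0$. The abstract asymptotic expansion result that I need, in the spirit of \cite{HWZ1,HWZ2}, says that any nonzero $L^2$-solution $\eta$ which tends to zero as $s\to\infty$ must, to leading order, decay along a single eigenspace of $A_{\gamma,J}$: there exists $\lambda < 0$ in the spectrum of $A_{\gamma,J}$ with eigenvector $e(t)$, and a constant $d > \lambda$ (meaning $-d < -|\lambda|$, i.e., faster decay), such that
$$\bigl\| \eta(s,\cdot) - e^{\lambda s}e(\cdot) \bigr\|_{L^2(S^1)} \le C e^{(\lambda - \delta')s}$$
for some $\delta' > 0$. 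The proof of this reduces to projecting $\eta$ onto spectral subspaces of $A_{\gamma,J}$, deriving scalar ODE inequalities for each projection, and extracting the dominant mode via a Gronwall-type argument that exploits the spectral gap between consecutive eigenvalues of $A_{\gamma,J}$ (available by non-degeneracy of $\gamma$).

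Finally, one upgrades this $L^2$-bound on the remainder $r(s,t) := e^{-\lambda s}\eta(s,t) - e(t)$ to uniform $C^\infty$-bounds by elliptic bootstrapping. The function $r$ satisfies a Cauchy--Riemann-type equation with coefficients that themselves decay exponentially, so applying interior Schauder (or $W^{k,p}$) estimates on cylindrical annuli $[s-1,s+1]\times S^1$ and iterating in $k$ converts the $L^2$-decay into $C^k$-decay, giving $|\nabla^i_s\nabla^j_t r(s,t)| \le M_{ij}e^{-ds}$ for some possibly smaller but still positive $d$.

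The main obstacle is the spectral/ODE step in the middle paragraph: the perturbation $B(s)$ is not just a relatively compact operator but actually involves $\partial_t$, and one must ensure that the leading eigenmode is identified unambiguously, that the subleading modes decay strictly faster, and that nothing is lost by passing between the time-dependent operator $A_{\gamma,J} + B(s)$ and the limit operator $A_{\gamma,J}$. The derivation of the linearized equation and the elliptic bootstrapping are routine given the non-degeneracy of $\gamma$ and the $C^\infty$-decay of $U$ and $V$.
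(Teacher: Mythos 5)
Note first that the paper under review does not prove this statement: it is imported verbatim from Siefring's earlier paper \cite{Siefinter2008} and used as a black box, so the only meaningful comparison is against Siefring's own proof. Your three-step architecture --- linearize the difference of the two asymptotic representatives to obtain a perturbed Cauchy--Riemann equation for $\eta = U - V$ with exponentially decaying perturbation, apply an abstract spectral decay theorem for $\partial_s\eta = -(A_{\gamma,J}+B(s))\eta$ to isolate a single dominant eigenmode, then bootstrap the $L^2$ estimate to $C^\infty$ via elliptic regularity on cylindrical annuli --- is exactly the structure of \cite{Siefinter2008}, which itself refines the Hofer--Wysocki--Zehnder analysis of \cite{HWZ1,HWZ2}. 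The obstacle you flag in the final paragraph (the perturbation contains a $G(s,t)\partial_t$ term and so is unbounded on $L^2$) is indeed the technical crux: the resolution, which you do not supply, is that the perturbation is bounded from $H^1$ to $L^2$ with exponentially decaying operator norm, and the spectral projection argument can be carried out using the $H^1$ domain of $A_{\gamma,J}$. You identify the gap honestly, but the proposal leaves it open.

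Two smaller issues should be fixed. You write that the constant satisfies ``$d > \lambda$ (meaning $-d < -|\lambda|$, i.e., faster decay),'' which is self-contradictory: with $\lambda < 0$ the condition $d>\lambda$ does not even force $d>0$, while $-d < -|\lambda|$ means $d > |\lambda|$, which is strictly stronger than what the theorem asserts. The correct and sufficient requirement is simply $d > 0$, since then $e^{\lambda s}r(s,t) = O(e^{(\lambda-d)s})$, strictly faster than the leading term. Also, the statement has an unstated hypothesis: if $U\equiv V$ identically, the left side vanishes and cannot equal $e^{\lambda s}[e(t)+r(s,t)]$ with $e$ a nonzero eigenvector. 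Siefring assumes $U - V$ is not identically zero; your phrase ``any nonzero $L^2$-solution'' smuggles this in implicitly, but it should appear as an explicit hypothesis before the spectral step is invoked.
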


A refined formulation of Theorem~\ref{asymp_rep1} involves a sum of exponential terms with decreasing eigenvalues in the exponents (see \cite[Theorem~2.3]{Siefinter2008}), namely
\begin{equation}\label{asymp_rep2}
    U(s,t) = \sum_{i=1}^N e^{\lambda_i s} \left[e_i(t) + r_i(s,t)\right],
\end{equation}
where $\lambda_j < \lambda_i$ for $j > i$, and each $e_i \neq 0$ is an eigenvector of $A_{\gamma,J}$ with eigenvalue $\lambda_i$. The functions $r_i$ satisfy $r_i(s,t) = r_i\!\left(s, t + \tfrac{1}{k_i}\right)$.
The sequence $\{k_i\}$ is strictly decreasing in $i$ and is defined inductively by setting $k_1 = \operatorname{cov}(e_1)$ and $k_i = \gcd(k_{i-1}, \operatorname{cov}(e_i))$. Moreover, the remainder terms $r_i$ satisfy $\left|\nabla_s^l \nabla_t^m r_i(s,t)\right| \leq M_{lm} e^{-ds}$.
See Section~\ref{sec: Asymptotic operator} for the definition of the covering number. 

\begin{remark}
This remark that follows from Theorem~\ref{asymp_rep1} describes the behavior of a pseudoholomorphic curve or a pair of curves near the punctures. The definition of the asymptotic self intersection indices implicitly depends on this remark.
\begin{enumerate}
    \item A pseudoholomorphic end $[\tilde{u};z]$ is either embedded or a multiple cover of an embedded curve.
    \item A pair of pseudoholomorphic ends $[\tilde{u};z]$ and $[\tilde{v};w]$ are either non-intersecting, or are equal, or one is a multiple of the other, or else both are multiple coverings of a common curve.
    \end{enumerate}
\end{remark}

We say that a pseudoholomorphic end $[S,j,\tilde{u},z]$ winds if $U(s,t) \neq 0$ on a sufficiently small neighbourhood of $z$. In that case, we define \textit{the asymptotic winding number} of $\tilde{u}$ at $z$ relative to the unitary trivialization $\Phi$ of $(\gamma^* \xi^H , \omega, J)$ as $$\wind^{\Phi}_{\infty}(\tilde{u}; z)\coloneqq \wind\left(\Phi^{-1} e (\tilde{u}; z)\right).$$
where $e(\tilde{u}; z)$ be the eigenvector in the asymptotic expression of $\tilde{u}$ at $z$ from Theorem~\ref{asymp_rep1}.\\

We say that a pseudoholomorphic end $[S,j,\tilde{u},z]$ is embedded if it admits a representative end model that is embedded. If $m_z \geq 2$, then $U(s, t)-U\left(s,t+ \frac{j}{m_z}\right) = 0$ if and only if $j$ is a multiple of $m_z$ and Theorem~\ref{asymp_rep1} provides the following description for the difference of asymptotic representatives,
$$U(s, t)- U\left(s, t + \frac{j}{m_z} \right) = e^{\mu_j s} \left[e'_j(t) + r'_j (s, t)\right]$$
where $e'_j \neq 0$ an eigenvector of $A = A_{\gamma^{m_z},J}$ with eigenvalue $\mu_j < 0$ and $r'_j$ converging exponentially to $0$. In that case, we define \textit{the secondary winding number} of $\tilde{u}$ at $z$ relative to the trivialization $\Phi$ as 
$$\wind^{\Phi}_2(\tilde{u}; z)= \sum_{j=1}^{m_z-1} \wind\left(\Phi^{-1} (e'_j(t)\right).$$

\begin{lemma} \label{delta1}\cite[Subsection~3.1]{Siefinter2009}
Let $[S,j,\tilde{u},z]$ be an embedded pseudoholomorphic end that winds, then the quantity
$$d_0(\tilde{u} ; z) \coloneqq \alpha^{\Phi}(\gamma_z^{m_z}) -\wind_{\infty}^{\Phi}(\tilde{u}; z)$$
is non-negative and independent of the choice of trivialization. Moreover, the following quantities
\[\begin{array}{ll}
\Delta_1 (\tilde{u}; z)\coloneqq& (m- 1)d_0(\tilde{u}; z)- \overline{\sigma}(\gamma_z^{m_z}) + \cov\left(e_1 (\tilde{u}; z)\right)\\
\Delta_2 (\tilde{u}; z) \coloneqq & (m-1) \wind^{\Phi}_{\infty}(\tilde{u}; z) - \cov\left(e_1 (\tilde{u}; z)\right) + 1 - \wind_2^{\Phi} (\tilde{u}; z)
\end{array}\]
are even, non-negative, and independent of trivialization. Here, the eigenvector $e_1(\tilde{u}; z)$ is given by the Equation~\ref{asymp_rep2}.
\end{lemma}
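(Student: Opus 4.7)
The non-negativity of $d_0$ is a direct consequence of Lemma~\ref{asymptotic_dimension}(2): the leading eigenvalue $\lambda$ in the expansion of Theorem~\ref{asymp_rep1} is negative, so $\lambda\leq\sigma_{\max}^{-}(\gamma_z^{m_z})$, and monotonicity of $w(\cdot,[\Phi])$ in the eigenvalue gives $\wind_{\infty}^{\Phi}(\tilde{u};z)=w(\lambda,[\Phi])\leq w(\sigma_{\max}^{-},[\Phi])=\alpha^{\Phi}(\gamma_z^{m_z})$. For trivialization-independence, changing the unitary trivialization of $(\gamma_z^{m_z})^*\xi^H$ by a degree-$k$ loop in $U(1)$ shifts the winding number of every section by the same integer, so this shift cancels in the difference $d_0$. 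The quantities $\overline{\sigma}(\gamma_z^{m_z})$ and $\cov(e_1(\tilde{u};z))$ are manifestly trivialization-free, so $\Delta_1$ inherits trivialization-independence from $d_0$. For $\Delta_2$, the sum $\wind_2^{\Phi}(\tilde{u};z)=\sum_{j=1}^{m_z-1}\wind^{\Phi}(e'_j)$ shifts by $(m_z-1)$ times the common winding shift, matching the shift in $(m_z-1)\wind_{\infty}^{\Phi}$, so $\Delta_2$ is invariant as well.

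For evenness and non-negativity of $\Delta_1$ and $\Delta_2$ I plan to combine the refined expansion in Equation~\eqref{asymp_rep2} with the embedded hypothesis and the pairing property of Lemma~\ref{asymptotic_dimension}(3). A combinatorial observation central to the argument is that for an eigenvector $e_i$ of $A_{\gamma_z^{m_z},J}$ with covering number $k_i$, the shifted difference $e_i(t)-e_i(t+j/m_z)$ vanishes identically in $t$ if and only if $(m_z/k_i)\mid j$. Applied to $e_1$, exactly $m_z-k_1$ of the indices $j\in\{1,\ldots,m_z-1\}$ produce differences $U(s,t)-U(s,t+j/m_z)$ whose leading exponential order is $e^{\lambda_1 s}$; the remaining $k_1-1$ indices must receive their leading contribution from eigenvectors appearing strictly later in~\eqref{asymp_rep2}, whose eigenvalues are smaller than $\lambda_1$ and whose windings are therefore constrained by monotonicity.

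Feeding these counts into the definitions of $\wind_2^{\Phi}$ and $d_0$ rearranges the identities for $\Delta_1$ and $\Delta_2$ into closed expressions for the windings of the deeper eigenvectors. Both evenness and the correct sign then follow from Lemma~\ref{asymptotic_dimension}(3), which asserts that the eigenvalues of $A_{\gamma_z^{m_z},J}$ partition into pairs of equal winding number; this pairing forces the deeper contributions to occur two at a time, while the monotonicity bound $w\leq\alpha^{\Phi}$ pins down the sign. The boundary case $d_0=0$ is handled separately: it forces $e_1$ to lie in the $\sigma_{\max}^{-}$-eigenspace, so $k_1=\overline{\sigma}(\gamma_z^{m_z})$ and both formulas collapse to easily checked expressions. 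The main obstacle is the bookkeeping in this final step, where a direct application of monotonicity does not suffice: one has to track the exact multiplicities with which each winding number occurs in the spectrum, together with how these multiplicities interact with the divisor structure of $m_z$ through the relation $k_i=\gcd(k_{i-1},\cov(e_i))$ from the refined expansion.
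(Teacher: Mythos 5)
This lemma is cited in the paper from \cite[Subsection~3.1]{Siefinter2009} and is not reproved there, so there is no in-paper argument to compare against; your proposal is an attempt to reconstruct Siefring's proof from scratch.

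Your handling of the easy parts is correct. The non-negativity of $d_0$ does follow directly from the monotonicity in Lemma~\ref{asymptotic_dimension}(2), and trivialization-independence of $d_0$, $\Delta_1$, $\Delta_2$ follows by the shift argument you give: a change of trivialization of $(\gamma_z^{m_z})^*\xi^H$ by a degree-$k$ loop shifts each of $\alpha^{\Phi}$, $\wind_{\infty}^{\Phi}$, and each $\wind^{\Phi}(e'_j)$ by the same $k$, and all these shifts cancel in $d_0$, $\Delta_1$, $\Delta_2$. Your combinatorial observation that $e_i(t)-e_i(t+j/m_z)=0$ precisely when $(m_z/\cov(e_i))\mid j$ is also right, as is the resulting count that exactly $m_z-k_1$ indices $j$ produce a difference whose leading term sits at level $\lambda_1$.

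The real issue is that the hardest claims, evenness and non-negativity of $\Delta_1$ and $\Delta_2$, are not proven but only planned. You explicitly defer the decisive step (``the main obstacle is the bookkeeping''). And this step is not routine: substituting your count into $\Delta_2$ gives
$$\Delta_2 = \sum_{\substack{0<j<m_z\\(m_z/k_1)\mid j}} \left(\wind_{\infty}^{\Phi}(\tilde{u};z) - \wind^{\Phi}(e'_j) - 1\right),$$
so non-negativity requires $\wind^{\Phi}(e'_j)\le \wind_{\infty}^{\Phi}(\tilde{u};z)-1$ for every special index $j$, a \emph{strict} drop by one, not merely the weak inequality that non-strict monotonicity provides. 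Lemma~\ref{asymptotic_dimension}(3) does not immediately force this: the deeper eigenvalue supplying $e'_j$ could a priori share the winding of $\lambda_1$ whenever $\dim E_{\lambda_1}=1$. The actual argument must iterate the divisor analysis through the full chain $k_1>k_2>\cdots$ from the refined expansion~\eqref{asymp_rep2}, matching at each stage the count of indices promoted to the next eigenvalue against the multiplicity constraints from the pairing property. You identify the right ingredients but do not execute them, and ``the correct sign then follow[s] from Lemma~\ref{asymptotic_dimension}(3)'' is an assertion where a derivation is needed.

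There is also a concrete error in your treatment of the boundary case. You claim $d_0=0$ ``forces $e_1$ to lie in the $\sigma_{\max}^-$-eigenspace.'' This holds when $p(\gamma_z^{m_z})=0$, since then $\sigma_{\max}^-$ is paired with a positive eigenvalue and is the unique negative eigenvalue with winding $\alpha^{\Phi}$. But when $p(\gamma_z^{m_z})=1$, both eigenvalues of winding $\alpha^{\Phi}$ are negative; $d_0=0$ then only pins $\lambda_1$ to one of those two eigenvalues, and $\lambda_1$ may strictly underlie $\sigma_{\max}^-$. The claim that this case ``collapses to easily checked expressions'' therefore rests on a false dichotomy.
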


The asymptotic self-intersection indices defined below are built using the indices $\Delta_1$ and $\Delta_2$. Therefore, understanding the vanishing set of $\Delta_1$ and $\Delta_2$ is crucial for analyzing the vanishing set of the asymptotic self-intersection indices. Here is a lemma that describes exactly when the quantities $\Delta_1 (\tilde{u}; z)$ and $\Delta_2 (\tilde{u}; z)$ vanish. 

\begin{lemma}\cite[Section~3.1]{Siefinter2009} \label{self-intersction=0}
Let $[S,j,\tilde{u},z]$ be defined as above. Then, we have that $\Delta_1 (\tilde{u}; z) = 0$ if and only if at least one of the following holds:
\begin{itemize}[topsep=5pt,itemsep=5pt]
    \item $m_z=1$
    \item $d_0 (\tilde{u}; z) = 0$
    \item $d_0 (\tilde{u}; z) = 1 \mathrm{\ and\ }  \overline{\sigma}(\tilde{u}; z)= m_z.$
\end{itemize}
We have that $\Delta_2 (\tilde{u}; z) = 0$ if and only if the asymptotic representative of $\tilde{u}$ near $z$, as described in Equation~\ref{asymp_rep2}, has one term with $\cov(e_1 (\tilde{u}; z))=1$ or two terms with the winding of the eigenvectors appearing in this formula differ precisely by 1.
\end{lemma}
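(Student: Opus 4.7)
The plan is to prove both halves of the lemma by direct arithmetic, converting the analytic data in the refined expansion (Equation~\ref{asymp_rep2}) into divisibility statements about $m_z$ and spectral windings. The key dictionary is that for every eigenvector $e$ of $A_{\gamma^m,J}$ with eigenvalue $\lambda$ and winding $w := w(\lambda,\Phi)$, one has $\cov(e) = \gcd(m,w)$; this follows by the same trivialization-invariance argument that the paper already gives for $\sigma_{\max}^-$. Throughout I apply this to each $e_i$, writing $w_i := w(\lambda_i,\Phi)$ and $n_i := m/\cov(e_i)$, so that $k_i = m/n_i$.

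For $\Delta_1$: set $m = m_z$, $a = \alpha^\Phi(\gamma^m)$, $w = \wind_\infty^\Phi = w_1$, and $d_0 = a - w$, so Lemma~\ref{delta1} reads
$$\Delta_1 = (m-1)d_0 + \gcd(m,w) - \gcd(m,a).$$
Each of the three listed conditions is a one-line substitution giving $\Delta_1 = 0$ (most notably $d_0 = 1$ with $m \mid a$ yields $\gcd(m,w) = 1$ and $\gcd(m,a) = m$). For the converse, assuming $m \geq 2$ and $d_0 \geq 1$: the estimate $(m-1)d_0 \geq m-1$ combined with $\gcd(m,a) - \gcd(m,w) \leq m - 1$ forces $d_0 = 1$ and $\gcd(m,a) = m$, i.e.\ $\overline{\sigma}(\gamma^m) = m$.

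For $\Delta_2$: I first derive a formula for $\wind_2^\Phi$. For each $j \in \{1,\ldots,m-1\}$, the leading term of $U(s,t) - U(s,t+j/m)$ is $e_{i_j}(t) - e_{i_j}(t+j/m)$ with $i_j$ the smallest index such that $n_{i_j} \nmid j$; this difference is itself an eigenvector of $A_{\gamma^m,J}$ for $\lambda_{i_j}$, so its winding equals $w_{i_j}$. Because $n_1 \mid n_2 \mid \cdots \mid n_N = m$ (equivalently $k_1 > k_2 > \cdots > k_N = 1$), counting the set $\{j : n_{i-1} \mid j,\ n_i \nmid j\}$ gives
$$\wind_2^\Phi = (m-k_1)\,w_1 + \sum_{i=2}^{N}(k_{i-1}-k_i)\,w_i.$$
Substituting into the definition of $\Delta_2$ and using $k_1-1 = \sum_{i=2}^{N}(k_{i-1}-k_i)$ collapses it to
$$\Delta_2 = \sum_{i=2}^{N}(k_{i-1}-k_i)\bigl(w_1 - w_i - 1\bigr).$$
Together with the strict monotonicity $w_1 > w_2 > \cdots > w_N$ discussed next, each summand is non-negative and those with $i \geq 3$ are strictly positive, so $\Delta_2 = 0$ forces $N \leq 2$; the case $N = 2$ then forces $w_1 - w_2 = 1$ (since $k_1 \geq 2$), and the case $N = 1$ is equivalent to $\cov(e_1) = k_1 = 1$.

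The main obstacle is verifying the strict monotonicity of the windings in the refined expansion. Combining $k_i = \gcd(k_{i-1}, \cov(e_i))$ with $\cov(e_i) = \gcd(m, w_i)$, any hypothetical equality $w_i = w_j$ for $i < j$ forces $\cov(e_i) = \cov(e_j)$; propagating the recursion along the chain $k_{j-1} \mid k_i \mid \cov(e_i) = \cov(e_j)$ yields $k_j = \gcd(k_{j-1}, \cov(e_j)) = k_{j-1}$, contradicting the strict decrease of $k_\bullet$. Once monotonicity is in place, the collapsed expression for $\Delta_2$ becomes manifestly non-negative (as a byproduct recovering the non-negativity in Lemma~\ref{delta1}) and both equivalences in the lemma reduce to short arithmetic checks.
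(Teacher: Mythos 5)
Your proof is correct, and it follows essentially the same arithmetic strategy as Siefring's original argument, which the paper cites (\cite[Section~3.1 and Lemma~3.14]{Siefinter2009}) without reproducing. The $\Delta_1$ part is a clean bounding argument, and the $\Delta_2$ part is right: the count of $j$ with $n_{i-1}\mid j$, $n_i\nmid j$ is indeed $k_{i-1}-k_i$ (using $n_1\mid n_2\mid\cdots\mid n_N=m$ and that both $e_i$ and $r_i$ vanish in the difference when $m/k_i\mid j$, which holds whenever $n_\ell\mid j$ for all $\ell\le i$), and the collapse to $\Delta_2=\sum_{i\ge 2}(k_{i-1}-k_i)(w_1-w_i-1)$ together with strict monotonicity gives the stated characterization. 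Two small cautions. First, your key dictionary $\cov(e)=\gcd(m,\wind^\Phi(e))$ for \emph{all} eigenvectors does not actually follow from "the same trivialization-invariance argument" the paper gives for $\sigma_{\max}^-$: trivialization-invariance only shows $\gcd(m,w)$ is well-defined, whereas the equality $\cov(e)=\gcd(m,w)$ requires the $\Z_m$-equivariance structure of $A_{\gamma^m,J}$ and the fact that the winding is constant on each eigenspace (as in HWZ / \cite{Siefinter2008}); you should cite that rather than invoke trivialization-invariance. Second, your argument tacitly uses that for $N=1$ embeddedness forces $\cov(e_1)=1$ and that in general $k_N=1$; both hold by the embeddedness hypothesis built into Lemma~\ref{delta1}, but this should be said explicitly since the telescoping identity $k_1-1=\sum_{i\ge2}(k_{i-1}-k_i)$ depends on $k_N=1$.
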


We now define \textit{the asymptotic self-intersection index} $\delta_{\infty}(\tilde{u}; z)$ of $[S,j,\tilde{u},z]$ to be
\begin{equation*}
    \begin{split}
        \delta_{\infty}(\tilde{u}; z) \coloneqq \frac{1}{2}\left(\Delta_{1} (\tilde{u}; z)+\Delta_2(\tilde{u}; z)\right).
    \end{split}
\end{equation*}

We close this section by defining the relative asymptotic winding number and the relative asymptotic self intersection index $\delta_{\infty}([\tilde{u}; z], [\tilde{v}; w])$. Now assume that the curves $\tilde{u}$ and $\tilde{v}$ are non-intersecting near the punctures with $\gamma \coloneqq \gamma_z=\gamma_w$ (but $m_z$ does not have to be equal to $m_w$). This assumption implies that $m_w\cdot[\tilde{u}; z]$ and $m_z\cdot[\tilde{v}; w]$ are non-intersecting and the difference of their asymptotic representatives $U(s, t)- V\left(s, t + \frac{j}{m}\right)$ is never zero, where $m:=m_zm_w$. Moreover, by Theorem~\ref{asymp_rep1} we can write
$$U(s, t) -V\left(s, t + \frac{j}{m}\right) = e^{\mu_j s} [e'_j (t) + r'_j (s, t)]$$
where $e'_j$ is an eigenvector of the asymptotic operator with eigenvalue $\mu_j < 0$ and where $r'_j$ converges exponentially to zero. In that case, we define \textit{the relative asymptotic winding number} of $\tilde{u}$ and $\tilde{v}$ at $z$ and $w$ respectively relative to the trivialization $\Phi$ as
$$\wind^{\Phi}_{\mathrm{rel}} \left(m_w\cdot[\tilde{u}; z], m_z\cdot[\tilde{v}; w]\right) =
\sum_{j=0}^{m-1} \wind\left(\Phi^{-1} e'_j \right).$$

\begin{remark}
Although we have given an algebraic definition of $\Delta_1$, $\Delta_2$, and $\wind^{\Phi}_{\mathrm{rel}}$, there also exists a geometric interpretation. See \cite[Section~3.2]{Siefinter2009} for a detailed account of this perspective. See also the footnotes on pages 1040 and 1042 of \cite{Siefinter2009}, as well as the paragraph following \cite[Equation~3.29]{Siefinter2009} on page 1052, for brief summaries of these descriptions.
\end{remark}

\begin{lemma}\cite[Section~3.1]{Siefinter2009}\label{relasymp_nonzero}
Let $[\tilde{u}; z]$ and $[\tilde{v}; w]$ be defined as above. Then
$$\wind^{\Phi}_{\mathrm{rel}}\left(m_w\cdot[\tilde{u}; z], m_z\cdot[\tilde{v}; w]\right)=\wind^{\Phi}_{\mathrm{rel}}\left(m_z\cdot[\tilde{v}; w],m_w\cdot[\tilde{u}; z]\right)$$ and the quantity
\begin{equation*}
       m_z m_w \max\left\{\frac{\wind^{\Phi}_{\infty}(u; z)}{m_z}, \frac{\wind^{\Phi}_{\infty}(v; w)}{m_w}\right\} 
       -\frac{1}{m_z m_w} \wind^{\Phi}_{\mathrm{rel}} \left(m_w {\cdot} [u; z], m_z {\cdot}[v; w]\right)
\end{equation*}
is non-negative and independent of the choice of trivialization, and is positive only if $e_1 (m_w {\cdot} \tilde{u}; z)= j *_{m} e_1 (m_z{\cdot}\tilde{v}; w)$ for some $j \in \mathbb{Z}_{m}$, where $m:=m_zm_w$ and $*_{m}$ denotes the natural $\mathbb{Z}_{m}$-action on  $\gamma^{*}\xi^{H}$.
\end{lemma}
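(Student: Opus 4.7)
The plan is to handle each of the three assertions separately, all based on the leading-eigenvector expansion from Theorem~\ref{asymp_rep1} applied on the $m$-fold cover (where $m = m_z m_w$). For the symmetry, the $k$-th summand of $\wind^\Phi_{\mathrm{rel}}(m_z \cdot [v;w], m_w \cdot [u;z])$ is the winding of the leading eigenvector $\tilde{e}'_k$ of $V(s,t) - U(s, t + k/m)$. Substituting $t \mapsto t - k/m$ and using $1$-periodicity of $V$ in $t$ turns this into
\[
V\bigl(s, t + \tfrac{m-k}{m}\bigr) - U(s,t) = -\bigl[U(s,t) - V\bigl(s, t + \tfrac{m-k}{m}\bigr)\bigr],
\]
whose leading eigenvector is $-e'_{m-k}$. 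Since winding is invariant under both sign change and rotation of the $S^1$ argument, $\wind^\Phi(\tilde{e}'_k) = \wind^\Phi(e'_{m-k})$; summing over $k$ and reindexing $k \leftrightarrow m-k$ gives the symmetry.

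For trivialization independence, I would consider a change $\Phi \rightsquigarrow \Phi'$ by a degree-$d$ loop in $U(1)$ over the simple orbit $\gamma$; this induces degree-$m_z d$, $m_w d$, and $md$ changes over $\gamma^{m_z}$, $\gamma^{m_w}$, and $\gamma^m$ respectively, so $\wind^{\Phi'}_\infty(u;z) = \wind^\Phi_\infty(u;z) - m_z d$, $\wind^{\Phi'}_\infty(v;w) = \wind^\Phi_\infty(v;w) - m_w d$, and since $\wind^\Phi_{\mathrm{rel}}$ is a sum of $m$ winding numbers of sections of $(\gamma^m)^*\xi^H$, $\wind^{\Phi'}_{\mathrm{rel}} = \wind^\Phi_{\mathrm{rel}} - m^2 d$. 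Substituting into the expression, the $m_z m_w \max\{\cdot,\cdot\}$ term picks up $-m_z m_w d$ while $-\tfrac{1}{m_z m_w}\wind^\Phi_{\mathrm{rel}}$ picks up $+m_z m_w d$, so the whole quantity is unchanged.

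For non-negativity I would first identify $e_1(m_w \cdot u; z)$ as the pullback of $e_1(u; z)$ along the $m_w$-fold cover of the loop, whence $\wind^\Phi(e_1(m_w \cdot u; z)) = m_w \wind^\Phi_\infty(u; z)$, and similarly $\wind^\Phi(e_1(m_z \cdot v; w)) = m_z \wind^\Phi_\infty(v; w)$. For each $j$ the leading eigenvalue $\mu_j < 0$ of $U(s,t) - V(s, t + j/m)$ is bounded above by the larger of the leading eigenvalues of the covers $m_w \cdot u$ and $m_z \cdot v$ (the slower-decaying one dominates), so monotonicity of winding in eigenvalue (Lemma~\ref{asymptotic_dimension}(2)) yields
\[
\wind^\Phi(e'_j) \;\le\; \max\bigl\{m_w \wind^\Phi_\infty(u;z),\, m_z \wind^\Phi_\infty(v;w)\bigr\} \;=\; m_z m_w \max\bigl\{\tfrac{\wind^\Phi_\infty(u;z)}{m_z},\, \tfrac{\wind^\Phi_\infty(v;w)}{m_w}\bigr\}.
\]
Summing over $j = 0, \ldots, m-1$ and dividing by $m_z m_w$ gives the claimed inequality. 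Strict inequality at some $j$ requires the leading terms of $U$ and of the shift $V(s, t + j/m)$ to cancel, which forces both that their leading eigenvalues agree and that $e_1(m_w \cdot u; z)$ equals the $j/m$-shift of $e_1(m_z \cdot v; w)$; identifying $*_m$ with this cyclic shift yields the stated criterion $e_1(m_w \cdot u; z) = j *_m e_1(m_z \cdot v; w)$.

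The main technical obstacle is the bookkeeping under covers: verifying carefully that pullback multiplies winding by the covering degree, and that the natural $\mathbb{Z}_m$-action $*_m$ on sections of $(\gamma^m)^* \xi^H$ coincides with the shift $t \mapsto t + j/m$ induced by the deck transformation of the $m$-fold cover of the simple orbit. Once these identifications are in hand, both the inequality and the criterion for its strictness reduce directly to the spectral monotonicity in Lemma~\ref{asymptotic_dimension}.
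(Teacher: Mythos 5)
The paper records this lemma with a citation to Siefring's Section~3.1 and does not supply its own proof, so there is no internal argument to compare against; your reconstruction is correct. The symmetry argument via negation and $S^1$-reparametrization, the $\mp m_z m_w d$ cancellation giving trivialization-independence, and the reduction of both the non-negativity and the strictness criterion to the winding monotonicity and the ``same eigenvalue implies same winding'' properties of Lemma~\ref{asymptotic_dimension} all hold up, with the two bookkeeping facts you flag (pullback under the $m_w$-fold cover multiplies winding by $m_w$ in the induced trivialization, and $*_m$ is the deck-transformation shift $t\mapsto t+j/m$) being exactly as you describe; this is the expected argument from the cited reference.
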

We now define the \textit{relative asymptotic self-intersection index}  $\delta_{\infty}([\tilde{u}; z], [\tilde{v}; w])$ as follows,
\begin{equation*}
    \begin{split}
       \delta_{\infty}([\tilde{u}; z], [\tilde{v}; w]) \coloneqq & m_z m_w \max\left\{\frac{\alpha^{\Phi}(\gamma^{m_z})}{m_z}, \frac{\alpha^{\Phi}(\gamma^{m_w})}{m_w}\right\} \\ 
       -&\frac{1}{m_z m_w} \wind^{\Phi}_{\mathrm{rel}} \left(m_w \cdot [u; z], m_z \cdot [v; w]\right) 
    \end{split}
\end{equation*}

\begin{remark}
We assumed above that the puncture $z$ (resp. $w$) of the curves $\tilde{u}$ (resp. $\tilde{v}$) is positive. The definition of asymptotic, secondary and relative winding numbers can be extended when the punctures considered are both negative. These winding numbers satisfy properties similar to the ones given above.
\end{remark}

\subsection{Singularity index and intersection number}\label{intersection number and singularity index}

In this section we recall the definitions of generalised singularity index and intersection number of pseudoholomorphic curves given by R. Siefring in \cite{Siefinter2009}. The total asymptotic self-intersection (or singularity) index $\delta_{\infty}(\tilde{u})$ of a pseudoholomorphic curve $\tilde{u}: \Sigma\setminus \Gamma \rightarrow \widehat{W}$ is defined as follows,

\begin{equation*}
    \begin{split}
\delta_{\infty}(\tilde{u})=\sum_{z\in \Gamma} \delta_{\infty}(\tilde{u}; z) &+ \frac{1}{2}\sum\limits_{\substack{(z,w)\in \Gamma_+ \times \Gamma_+,\\ z \neq w, \gamma_z =\gamma_w}}\delta_{\infty}\left([\tilde{u}; z], [\tilde{u}; w]\right)\\
&+ \frac{1}{2}\sum\limits_{\substack{(z,w)\in \Gamma_- \times \Gamma_-,\\ z \neq w, \gamma_z =\gamma_w}}\delta_{\infty}\left([\tilde{u}; z], [\tilde{u}; w]\right).
    \end{split}
\end{equation*}

Combining this with the usual singularity index $\delta(\tilde{u})$ which is the algebraic count of singular and double points of $\tilde{u}$, we define the generalised singularity index $\mathrm{sing}(\tilde{u})$ of a punctured pseudoholomorphic curve $\tilde{u}$ as
$$\mathrm{sing}(\tilde{u})\coloneqq \delta(\tilde{u}) + \delta_{\infty}(\tilde{u}).$$
The singularity index `$\mathrm{sing}$' is nonnegative-integer-valued, and equals zero for a given curve if and only if that curve is embedded and has total asymptotic self-intersection index equal to zero.\\

Having defined the asymptotic self-intersection index, we now define the intersection number of the pseudoholomorphic curves $\tilde{u}: \Sigma\setminus \Gamma \rightarrow \widehat{W}$ and $\tilde{v}: \Sigma'\setminus \Gamma' \rightarrow \widehat{W}$.

\begin{equation*}
    \begin{split}
    \tilde{u}*\tilde{v}:= \mathrm{int}(\tilde{u}, \tilde{v})  &+ \frac{1}{2}\sum\limits_{\substack{(z,w)\in \Gamma_+ \times \Gamma_+',\\ \gamma_z =\gamma_w}}\delta_{\infty}\left([\tilde{u}; z], [\tilde{v}; w]\right)\\
    &+ \frac{1}{2}\sum\limits_{\substack{(z,w)\in \Gamma_- \times \Gamma_-',\\ \gamma_z =\gamma_w}}\delta_{\infty}\left([\tilde{u}; z], [\tilde{v}; w]\right).
    \end{split}
\end{equation*}
where $\mathrm{int}(\tilde{u}, \tilde{v})$ is the usual intersection number.

\begin{theorem}\cite[Proposition~4.3]{Siefinter2009}
The generalized intersection number $\tilde{u}*\tilde{v}$ defined above is symmetric, bilinear and depends only on the homotopy classes of $\tilde{u}$ and $\tilde{v}$.
\end{theorem}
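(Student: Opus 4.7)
The plan is to reduce each of the three required properties to a corresponding property of the two pieces of the definition: the geometric intersection count $\mathrm{int}(\tilde{u},\tilde{v})$ and the asymptotic contributions $\delta_{\infty}([\tilde{u};z],[\tilde{v};w])$ at matching puncture pairs. Symmetry and bilinearity drop out almost immediately. For $\mathrm{int}$ symmetry is the standard sign-counting symmetry of transverse intersections in an oriented $4$-manifold; for $\delta_{\infty}$ it is the content of the symmetry
\[\wind^{\Phi}_{\mathrm{rel}}(m_w{\cdot}[\tilde{u};z],m_z{\cdot}[\tilde{v};w])=\wind^{\Phi}_{\mathrm{rel}}(m_z{\cdot}[\tilde{v};w],m_w{\cdot}[\tilde{u};z])\]
recorded in the lemma preceding the definition, combined with the manifest symmetry of the $\max\{\alpha^{\Phi}(\gamma^{m_z})/m_z,\alpha^{\Phi}(\gamma^{m_w})/m_w\}$ term. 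Bilinearity, interpreted as $\Z$-linear extendability under disjoint unions of curves on either input, follows from the corresponding additivity of each piece under disjoint unions.

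The substance is homotopy invariance. I would fix, for each simple orbit $\gamma$ arising as a common asymptotic limit, a unitary trivialization $\Phi$ of $\gamma^{*}\xi^{H}$ and use it to push off each matching end of $\tilde{u}$ by a small constant section along the first $\Phi$-basis vector, producing a proper perturbation $\tilde{u}_{\Phi}$ whose asymptotic cylinders are disjoint from those of $\tilde{v}$. The resulting signed count of transverse intersections defines a relative intersection number $\mathrm{int}_{\Phi}(\tilde{u},\tilde{v})$. Because the perturbed maps are proper with non-overlapping cylindrical ends, $\mathrm{int}_{\Phi}$ is a relative homology invariant of the pair and hence is unchanged under any asymptotically cylindrical homotopy. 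My goal is then to establish an identity of the form
\[\tilde{u}*\tilde{v}=\mathrm{int}_{\Phi}(\tilde{u},\tilde{v})+\sum_{(z,w)}F^{\Phi}_{z,w}(\alpha^{\Phi}(\gamma^{m_z}),\alpha^{\Phi}(\gamma^{m_w})),\]
where the second term is a $\Phi$-dependent quantity built only from asymptotic data at each matched puncture and is therefore automatically homotopy invariant.

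To produce this identity, I would compute separately the two $\Phi$-dependent discrepancies $\mathrm{int}(\tilde{u},\tilde{v})-\mathrm{int}_{\Phi}(\tilde{u},\tilde{v})$ and $\delta_{\infty}([\tilde{u};z],[\tilde{v};w])$ and show that each splits as a relative-winding piece involving $\wind^{\Phi}_{\mathrm{rel}}$ plus an $\alpha^{\Phi}$-piece, in such a way that the winding contributions cancel when the pieces are summed over matching puncture pairs. The first discrepancy is computed by reverse-isotoping the push-off back to $\tilde{u}$ and tracking which transverse double points enter the picture through the end; by Theorem~\ref{asymp_rep1} applied to $U(s,t)-V(s,t+j/m)$, these zeros are in bijection with the zeros of the leading eigenvector and are counted, with signs, by its winding with respect to $\Phi$. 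The second discrepancy is read off the definition of $\delta_{\infty}$ directly.

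The main obstacle is this signed-counting step. One must verify, with uniform sign control, that the transverse intersections swept out of the end as the push-off is turned off are counted exactly by $\wind^{\Phi}_{\mathrm{rel}}$, especially when $m_z\neq m_w$ and the computation takes place on the $m_zm_w$-fold cover of the underlying simple orbit. This relies on the normal form of Theorem~\ref{asymp_rep1}, the spectral monotonicity of Lemma~\ref{asymptotic_dimension} to control winding numbers of eigenvectors along the homotopy, and intersection positivity of pseudoholomorphic curves to rule out spurious cancellation between positive and negative intersections escaping through a given puncture.
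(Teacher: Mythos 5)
The paper does not supply a proof of this statement; it is quoted verbatim with a citation to Siefring's Proposition~4.3, and the authors make a point (see Remark~\ref{rem:GeometricDecrpition}) of \emph{not} reproducing the geometric argument, preferring instead the purely algebraic formulation via winding numbers. So there is no in-paper proof to compare against. What you propose is essentially Siefring's original proof: symmetry and bilinearity of each of the two pieces, followed by homotopy invariance established by constructing a $\Phi$-relative intersection number $i^{\Phi}(\tilde{u},\tilde{v})$ via a trivialized push-off of the ends, observing that this is a relative homology invariant, and then proving the identity
\[
\tilde{u}*\tilde{v}=i^{\Phi}(\tilde{u},\tilde{v})+\sum_{(z,w)}\Omega^{\Phi}_{\pm}\bigl(\gamma_z^{m_z},\gamma_w^{m_w}\bigr),
\]
where $\Omega^{\Phi}_{\pm}$ is built purely from the $\alpha^{\Phi}$'s of the asymptotic orbits and is therefore automatically homotopy invariant. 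Your decomposition of the two $\Phi$-dependent discrepancies with the relative-winding pieces cancelling is exactly how the identity is obtained, and you have correctly isolated the delicate step: the signed count of intersections escaping through a cylindrical end as the push-off is turned off is computed by the winding of the leading eigenvector from Theorem~\ref{asymp_rep1}, which requires intersection positivity to forbid sign cancellation, and uses the asymptotic normal form on the common $m_zm_w$-fold cover to handle distinct covering multiplicities. In short, the proposal is correct and coincides with the known proof; the only caveat is that it is the proof of the cited reference rather than of this paper, which takes the algebraic formulation as its starting point precisely so as not to repeat it.
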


\begin{remark}\label{rem:GeometricDecrpition}
We could compute this generalized intersection number $\tilde{u}*\tilde{v}$ by perturbing one of the pseudoholomorphic curves near the ends and counting their algebraic intersection number which depends on the direction of the perturbation. We could then add suitable winding numbers to this intersection number to remove the dependence on the direction of perturbation. This description is very geometric but we chose to avoid this as the results later use the explicit description of intersection number in terms of these winding numbers. Refer \cite[Chapter~4]{Wendlcontact3mani2020} or \cite[Section~3.2]{Siefinter2009} for the geometric treatment of this generalised intersection number.
\end{remark}

\section{Stratification of the moduli space} \label{section: Stratification of the moduli space} In this section, we define a sequence of evaluation maps on the moduli space of pseudoholomorphic curves mapping into the eigenspaces of asymptotic operators and prove its regularity. The specific analytical details and notations rely heavily on Wendl's SFT lecture notes \cite{SFTwendl}.\\
\vspace{1pt}

The linearization of the section $\cS$ defined in Section~\ref{Moduli space of pseudoholomorphic curves} at $(j,\tilde{u})\in T \times \cB^{k,p,\delta}$ is

$$D\cS(j,\tilde{u})\colon T_{j} T \oplus W^{k,p,\delta}(\dot{\Sigma},\tilde{u}^*T\widehat{W}) \oplus V_{\Gamma} \rightarrow \cE_{j,\tilde{u}}^{k-1,p,\delta}$$
$$(y,\eta) \mapsto J(\tilde{u})\circ d\tilde{u} \circ y + D_{\tilde{u}} \eta + \cF $$
where $D_{\tilde{u}}$ is the linearized Cauchy-Riemann operator (See~\cite[Section~2.1]{SFTwendl}) and $\cF$ represents the projection of the linearization defined over $V_{\Gamma}$.\\

Given a puncture $z\in \Gamma_+$, we define the `asymptotic' evaluation map on the moduli space $\cM^*(\widehat{W},J)$, that maps  into the eigenspace $E_{\lambda,z}$ of the asymptotic operator $A_{z}\coloneqq A_{\gamma_z^{m_z},J}$, where  $\lambda = \sigma_{\max}^-(\gamma_z^{m_z})$, as follows.
Suppose, $(U, \psi)$ is the asymptotic representative of $\tilde{u} \in \cM^*(\widehat{W},J)$ satisfying
\begin{gather*}
u(\psi(s, t)) = \exp_{\gamma_z^{m_z}(t)} U(s,t),\\
U(s, t) = e^{\mu s} [e(t) + r (s, t)].
\end{gather*}
Then, the evaluation map is defined as 
\begin{equation}\label{evaluation map definfition}
e_{\lambda,z}(\tilde{u})= \begin{cases} 
      e(t) & \mathrm{if\ } \mu=\lambda\\
      0 & \mathrm{if\ } \mu <\lambda
   \end{cases}
\end{equation}
Note that the map $e_{\lambda,z}\colon \cM^*(\widehat{W},J) \rightarrow E_{\lambda,z}$ depends only on $[\tilde{u},z]$. This evaluation map $e_{\lambda,z}$ could also be defined as the limit $\lim\limits_{s\rightarrow \infty}e^{-\lambda s} U(s,t)$ and by Theorem~\ref{asymp_rep1}, this limit takes values in $E_{\lambda,z}$. This definition makes it more evident that the evaluation map is smooth. While the definition of the asymptotic evaluation map and the subsequent proofs in this section are specifically provided for positive punctures, it's important to note that they naturally extend to negative punctures as well.\\

The evaluation map is generally not a submersion. Therefore, we introduce a suitable perturbation of a fixed almost complex structure $J_{\mathrm{fix}}$ to establish transversality. Given an open subset $O\subset W$, the subset 
$$\cJ_O\coloneqq \left\{ J \in \cJ(\widehat{W},\omega_{\phi})\mid J=J_{\mathrm{fix}} \mathrm{\ on \ } \widehat{W}\setminus O\right\}$$ 
containing $J_{\mathrm{fix}} \in \cJ(\widehat{W},\omega_{\phi})$ with $C^{\infty}$-topology is a Fr\'echet manifold. The tangent space at $J_0 \in \cJ_O$ given by
$$T_{J_0} \cJ_O\coloneqq \left\{Y \in \Gamma\left(\End_{\mathbb{C}}\left( T\widehat{W}, J_0\right)\right) \mid  Y |_{\widehat{W}\setminus O}= 0 \mathrm{\ and\ } \omega_{\phi}(\cdot, Y\cdot)+ \omega_{\phi}(Y\cdot,\cdot)=0\right\}.$$

Since Fr\'echet manifolds are not ideal for applying the regular value theorem, we will turn to Floer's $C_{\epsilon}$ space. Given a sequence of positive numbers $\epsilon\coloneqq (\epsilon_{\ell})_{\ell=0}^{\infty}$ with $\epsilon_{\ell} \rightarrow 0$ as $\ell \rightarrow \infty$ and $c>0$, the space of ``$C_{\epsilon}$ small perturbations" of $J_0$ is given by
$$\cJ_O^{\epsilon}\coloneqq \left\{ J_Y \in \cJ_O \mid Y \in T_{J_0}\cJ_O \mathrm{\ with\ } \|Y\|_{C_{\epsilon}}<c \right\},$$
where 
$$\|Y\|_{C_{\epsilon}} = \sum_{\ell=0}^{\infty} \epsilon_{\ell} \|Y\|_{C^{\ell}(O)}$$
and the map $Y\rightarrow J_Y$ is given by 
\[Y\mapsto J_Y\coloneqq \left(1+\frac{1}{2}J_0Y\right)J_0 \left(1+\frac{1}{2}J_0Y\right)^{-1}.\]

This makes $\cJ_O^{\epsilon}$ a separable Banach manifold which can be identified with a subset of the aforementioned Fr\'echet space and we have a continuous inclusion $\cJ^{\epsilon}_O \hookrightarrow \cJ_O$. Based on these definitions, we define the universal moduli space as $$\cM(\widehat{W},\cJ^{\epsilon}_O) =\left\{ (\tilde{u},J) \mid \tilde{u} \in \cM^*(\widehat{W},J) \mathrm{\ and \ } J\in \cJ^{\epsilon}_O \right\}.$$ 

Recall that the definition of the moduli space $\mathcal{M}^*(\widehat{W},J)$ (see Section~\ref{Moduli space of pseudoholomorphic curves}) also requires the choice of an open subset of $W$. For this purpose, we will use the same open set $O \subset W$.

Note that the almost complex structure $J_{0}$ restricted to the open set  ${(0,\infty)\times M^{\pm}}\subset\widehat{W}$ is invariant under the natural translation map. Thus inducing an $\omega^{\pm}$-compatible complex structure on the sub-bundle $\xi^{H^{\pm}}\subset TM^{+}$, denoted also by $J_{0}$. The map $e_{\lambda,z}$ admits a natural extension to the universal moduli space $\cM(\widehat{W},\cJ^{\epsilon}_O)$ as every $J\in \cJ^{\epsilon}_O$ equals $J_{0}$ when restricted to ${(0,\infty)\times M^+}$ and $(-\infty, 0)\times M^-$. It is crucial that all almost complex structures $J$ have the same restriction near the boundary; otherwise, the asymptotic operator, and hence its eigenspaces, would depend on $J$. In that case, it would not be possible to extend the map $e_{\lambda,z}$ over the universal moduli space, as the codomain would vary with $J$. By fixing this choice, we obtain a well-defined extension.\\

We proceed to describe the map 
\[
(e_{\lambda,z})_*: T_{\tilde{u}}\mathcal{M}^*(\widehat{W},J) \to E_{\lambda,z},
\]
together with an extension of its domain that will be required in subsequent arguments. Let 
\[
\zeta \in T_{\tilde{u}}\mathcal{M}^*(\widehat{W},J) \subset W^{k,p,\delta}\!\left(\dot{\Sigma}, \tilde{u}^*T\widehat{W}\right) \oplus V_{\Gamma}.
\]
Throughout this section, we tacitly identify elements of $T_{\tilde{u}}\mathcal{M}^*(\widehat{W},J)$ with their projection onto $W^{k,p,\delta}(\dot{\Sigma}, \tilde{u}^*T\widehat{W})$. The tangent bundle of the cylindrical end admits the canonical splitting
\[
T\big((0,\infty)\times M^+\big)=\varepsilon \oplus \xi^H,
\qquad \varepsilon=\operatorname{span}\{\partial_r, X_H\}.
\]
Consequently, for a cylindrical neighborhood $\mathcal{O}$ of $z$, we may regard
\[
\zeta(s,t) \in W^{k,p,\delta}\big(\mathcal{O}, \tilde{u}^*\varepsilon \oplus \tilde{u}^*\xi^H\big).
\]
By \cite[Theorem~A.1]{Siefinter2008}, the $\xi^H$ component of $\zeta$, denoted by $\zeta(s,t)_{\xi^H}$, admits an asymptotic expansion of the form
\[
\zeta(s,t)_{\xi^H}=e^{\lambda s}\big[e(t)+r(s,t)\big],
\]
where $\lambda$, $e$, and $r$ are as in Theorem~\ref{asymp_rep1}. In particular, the leading-order term determines a well-defined element $e(t)\in E_{\lambda,z}$. Therefore, we have
\[
(e_{\lambda,z})_*(\zeta) := e(t).
\]

We now introduce an extension of the domain of this map. Observe that
\[
W^{k,p,\delta}\big(\mathcal{O}, \tilde{u}^*T((0,\infty)\times M^+)\big)
= W^{k,p,\delta}\big(\mathcal{O}, \tilde{u}^*\varepsilon\big) \oplus W^{k,p,\delta}\big(\mathcal{O}, \tilde{u}^*\xi^H\big).
\]
Define the subspace
\[
\mathcal{W} \coloneqq \left\{ V \in W^{k,p,\delta}(\mathcal{O}, \tilde{u}^*\xi^H) \ \middle|\ \lim_{s\to\infty} e^{-\lambda s} V(s,t)\ \text{exists in } C^\infty(S^1) \right\}.
\]
For $\zeta \in W^{k,p,\delta}\big(\mathcal{O}, \tilde{u}^*\varepsilon\big) \oplus \mathcal{W}$, we define the extended map by
\[
\zeta \longmapsto \operatorname{pr}\!\left( \lim_{s\to\infty} e^{-\lambda s} \zeta_{\xi^H}(s,t) \right),
\]
where $\zeta_{\xi^H}$ denotes the $\xi^H$ component of $\zeta$, and $\operatorname{pr}$ is the projection onto $E_{\lambda,z}$. By construction, this extension agrees with $(e_{\lambda,z})_*$ when restricted to $T_{\tilde{u}}\mathcal{M}^*(\widehat{W},J)$.

\begin{lemma}\label{local_existence}
For any $z\in\Gamma$, $\lambda = \sigma^-_{\max}(\gamma_z^{m_z})$ and $v\in E_{\lambda,z}$, there exist a cylindrical neighbourhood $\cO \subset \Sigma\setminus \Gamma$ of $z$ and a section $\sigma: \cO \rightarrow \tilde{u}^*T\widehat{W}$ satisfying two conditions: $(e_{\lambda,z})_*(\sigma)=v$ and $D_{\tilde{u}}(\sigma)$ has exponential decay strictly faster than $\lambda$.
\end{lemma}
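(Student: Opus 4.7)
The plan is to build $\xi$ by hand as a prescribed exponential factor times the eigenvector $v$, and then exploit the well-known asymptotic form of $D_{\tilde{u}}$ to check the decay of $D_{\tilde{u}}\xi$.

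Fix the cylindrical coordinates $(s,t)\in[R,\infty)\times S^1$ on a neighbourhood $\cO=\psi([R,\infty)\times S^1)$ of $z$ supplied by the asymptotic representative $(U,\psi)$ of $\tilde{u}$, and choose a unitary trivialization $\Phi$ of $\gamma_z^{m_z\,*}\xi^H$. Over $\cO$ the pullback $\tilde{u}^*T\widehat{W}$ splits asymptotically as
\[
\tilde{u}^*T\widehat{W}\big|_{\cO} \;\cong\; \mathbb{C}\langle\partial_r,X_H\rangle \;\oplus\; \xi^H,
\]
where the identification uses parallel transport from $(\tau s,\gamma(t))$ to $\tilde{u}(\psi(s,t))$ (valid because $U(s,t)\to 0$). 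In this trivialization, the linearized Cauchy--Riemann operator takes the form
\[
D_{\tilde{u}} \;=\; \partial_s + J_0\partial_t + S(s,t),
\]
where $S(s,t)\to S_\infty(t)$ exponentially fast in $C^\infty$ as $s\to\infty$, and the asymptotic operator is recovered as $A_{\gamma_z^{m_z},J} = -J_0\partial_t - S_\infty(t)$ on the $\xi^H$-summand. This is the standard asymptotic description used throughout \cite{SFTwendl}.

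Now define
\[
\xi(s,t) \;\coloneqq\; e^{\lambda s}\,v(t),
\]
viewed as a section of $\tilde{u}^*T\widehat{W}$ via the inclusion of the $\xi^H$-summand. A direct computation using $A_{\gamma_z^{m_z},J}\,v = \lambda v$ gives
\[
D_{\tilde{u}}\xi \;=\; e^{\lambda s}\bigl[\lambda v + J_0 v' + S(s,t)\,v\bigr] \;=\; e^{\lambda s}\bigl[\lambda v - A_{\gamma_z^{m_z},J}v\bigr] + e^{\lambda s}\bigl[S(s,t)-S_\infty(t)\bigr]v \;=\; e^{\lambda s}\bigl[S(s,t)-S_\infty(t)\bigr]v.
\]
Since $S-S_\infty=O(e^{-\mu s})$ for some $\mu>0$, we conclude $D_{\tilde{u}}\xi = O(e^{(\lambda-\mu)s})$, so $D_{\tilde{u}}\xi$ decays strictly faster than $e^{\lambda s}$. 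The identity $(e_{\lambda,z})_*\xi = v$ then holds essentially by construction, via the limit definition $e_{\lambda,z}(\cdot)=\lim_{s\to\infty}e^{-\lambda s}(\cdot)$ extended to the tangent space as noted after \eqref{evaluation map definfition}.

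There is no real obstacle here beyond bookkeeping: the main point is the cancellation $\lambda v - A_{\gamma_z^{m_z},J}v = 0$, which is exactly the eigenvalue equation. The only care needed is to set up a trivialization of $\tilde{u}^*T\widehat{W}$ in which $D_{\tilde{u}}$ assumes the stated asymptotic form; this is standard (see \cite[Sec.~2]{SFTwendl}), and the negative-puncture case is identical after flipping the sign of $s$.
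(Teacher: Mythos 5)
Your proof is correct and follows essentially the same route as the paper's: both set $\xi(s,t)=e^{\lambda s}v(t)$ in the $\xi^H$-summand of the asymptotic splitting, use that $D_{\tilde u}$ is asymptotically $\partial_s+J_0\partial_t+S_\infty(t)$ so that the eigenvalue equation $A_z v=\lambda v$ kills the leading term, and conclude that $D_{\tilde u}\xi=e^{\lambda s}\bigl(S(s,t)-S_\infty(t)\bigr)v$ decays strictly faster than $e^{\lambda s}$. The paper only adds some bookkeeping you elide (choosing $\delta<-\lambda$, working in $W^{k,p,\delta}$, and explicitly extending $(e_{\lambda,z})_*$ to a subspace $\cW$), but the core argument is identical.
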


In the above lemma, we choose the cylindrical neighbourhood $\cO$ sufficiently small so that $\tilde{u}(\cO) \subset (0,\infty)\times M^+$, and such that the extension of $(e_{\lambda,z})_*$ defined above is well-defined on $\sigma$.

\begin{proof}
Select a sufficiently small $\delta > 0$, specifically such that $\delta < -\lambda$. Then, we will focus on the linearization's restriction to the subspace $W^{k,p,\delta}\left(\dot{\Sigma},\tilde{u}^*T\widehat{W}\right)$:
$$D_{\tilde{u}}: W^{k,p,\delta}\left(\dot{\Sigma},\tilde{u}^*T\widehat{W}\right) \rightarrow W^{k-1,p,\delta}\left(\dot{\Sigma}, \bigwedge\nolimits^{0,1} T^*\dot{\Sigma}\otimes \tilde{u}^*T\widehat{W}\right).$$
 
Since $\tilde{u}$ is asymptotically cylindrical, there exists a cylindrical neighbourhood $\cO \subset \dot{\Sigma}$ of $z$ such that $\tilde{u}(\cO) \subset (0,\infty)\times M^+ \subset \widehat{W}$. Upon further restriction of the linearization to the sections defined on $\cO$, we obtain the following operator:
$$D': W^{k,p,\delta}\left(\cO,\tilde{u}^*T\left((0,\infty)\times M^+\right)\right) \rightarrow W^{k-1,p,\delta}\left(\cO, \bigwedge\nolimits^{0,1} T^*\dot{\Sigma}\otimes \tilde{u}^*T\left((0,\infty)\times M^+\right)\right).$$

Recall that the tangent space of $(0,\infty)\times M^+$ has a natural splitting into $\varepsilon \oplus \xi^H$, where $\varepsilon$ is the subbundle spanned by $\partial_r$ and the Reeb vector field $X_{H}$. As a result, the operator $D'$ can be viewed as 
$$D': W^{k,p,\delta}\left(\cO,\tilde{u}^*\varepsilon \oplus \tilde{u}^*\xi^H \right) \rightarrow W^{k-1,p,\delta}\left(\cO, \left(\bigwedge\nolimits^{0,1} T^*\dot{\Sigma}\otimes \tilde{u}^*\varepsilon\right) \bigoplus \left(\bigwedge\nolimits^{0,1} T^*\dot{\Sigma}\otimes \tilde{u}^*\xi^H\right)\right).$$

Note that the vector bundles $\tilde{u}^*\varepsilon$ and $\tilde{u}^*\xi^H$(which equals $u^*\xi^H$, as this pullback depends only on $u$) over $\cO$ are unitary trivializable as the cylindrical neighbourhood $\cO$ is homotopic to $S^1$.

By \cite[Proposition~7.9]{SFTwendl} the operator $D'$ is $C^{\infty}$-asymptotic to $(-i\partial_t \oplus A_z)$.  Specifically, if $D'$ is represented by $\overline{\partial}+S(s,t)$ and if $(-i\partial_t \oplus A_z)$ is represented by $-J_0 \partial_t -S_{\infty}(t)$ with $S,S_{\infty} \in C^{\infty}(\cO,\End_{\mathrm{sym}}(\mathbb{R}^4))$, then
$$\lim\limits_{s\rightarrow \infty}S(s,t) \rightarrow S_{\infty}(t)$$
exponentially in $C^{\infty}(S^1)$. The section $(0,\eta)\in W^{k,p,\delta}\left(\cO,\tilde{u}^*\varepsilon \oplus \tilde{u}^*\xi^H \right)$, where $\eta=e^{\lambda s}(v(t))$, satisfies $\overline{\partial}\eta+S_{\infty}(t)(0,\eta)=0$. This section will be represented simply by $\eta$ when there is no ambiguity. Thus, \begin{equation*}
    \begin{split}
        \overline{\partial}\eta+S(s,t)(\eta) &= \overline{\partial}\eta+S_{\infty}(t)(\eta)+ (S(s,t)-S_{\infty}(t))(\eta)\\
        &= \left(S(s,t)-S_{\infty}(t)\right)(\eta).
    \end{split}
\end{equation*} 

Since $\left(S(s,t)-S_{\infty}(t)\right)$ decays exponentially, the section $\left(S(s,t)-S_{\infty}(t)\right)(\eta)$ decays exponentially strictly faster than $\lambda$. Thus, the section $(0,\eta)$ is the required section $\sigma$ appearing in the statement of the theorem.
\end{proof}

\begin{proposition}   
\label{regularity of evaluation map}
For any $z\in \Gamma_+$ and $\lambda = \sigma_{\max}^-(\gamma_z^{m_z})$, the evaluation map $e_{\lambda,z}\colon \cM(\widehat{W},\cJ^{\epsilon}_O) \rightarrow E_{\lambda,z}$ is submersion.
\end{proposition}

\begin{proof}

We begin by examining the image of the operator
$$(\xi,Y)\xrightarrow[]{L}D_{\tilde{u}}\xi+\frac{1}{2}Y(\tilde{u})\circ d\tilde{u}\circ j_{\Sigma},$$
restricted to the set of all pairs $(\xi,Y) \in W^{k,p,\delta}\left(\dot{\Sigma}, \tilde{u}^*T\widehat{W}\right)\times  T_J(J^{\epsilon}_O)$, where $\supp{Y}\subset O$. Thus we define the subspace
$$Z^k \coloneqq \left\{ D_{\tilde{u}}\xi+\frac{1}{2}Y(\tilde{u})\circ d\tilde{u}\circ j_{\Sigma} \mid \xi \in W^{k,p,\delta}\left(\dot{\Sigma}, \tilde{u}^*T\widehat{W}\right),\mathrm{\ and \ }\supp Y \subset O \right\}.$$

By Lemma~\ref{local_existence}, we choose a cylindrical neighbourhood $\cO$ around $z$ and  $\xi: \cO \rightarrow \tilde{u}^*T\widehat{W}$ such that  $(e_{\lambda,z})_*(\xi)=v$ and $D_{\tilde{u}}(\xi)$ decays exponentially faster than $\lambda$. We now extend this to the entire Riemann surface using a bump function $\beta:\dot{\Sigma} \rightarrow [0,1]$ such that $\beta=1$ on another cylindrical neighbourhood $W\subset \cO$ and $\supp (\beta) \subset \cO$.

Let $c<0$ denote the exponential rate of decay of $D_{\tilde{u}}(\xi)$, and let $\delta>0$ be such that $-\delta \not\in \sigma(A_z)$ and $c<-\delta<\lambda$. Let us suppose for a moment that we have shown that $Z^k = W^{k-1,p,\delta}\left(\dot{\Sigma}, \bigwedge\nolimits^{0,1}\tilde{u}^*T\widehat{W}\right)$, we will show hwo this implies that 
$e_{\lambda,z}\colon \cM(\widehat{W},\cJ^{\epsilon}_O) \rightarrow E_{\lambda,z}$ is submersion. 
The statement $Z^k = W^{k-1,p,\delta}\left(\dot{\Sigma}, \bigwedge\nolimits^{0,1}\tilde{u}^*T\widehat{W}\right)$ implies that $D_{\tilde{u}}(\beta \xi) \in \im(L)$ i.e, there exists $(\eta,Y) \in W^{k,p,\delta}\left(\dot{\Sigma}, \tilde{u}^*T\widehat{W}\right)\times  T_J(J^{\epsilon}_O)$ such that,

$$D_{\tilde{u}}\eta+\frac{1}{2}Y(\tilde{u})\circ d\tilde{u}\circ j_{\Sigma}=D_{\tilde{u}}(\beta \xi)$$
and $\eta-\beta \xi$ is the required solution in $W^{k,p,\delta}\left(\dot{\Sigma}, \tilde{u}^*T\widehat{W}\right)$ with $(e_{\lambda,z})_*(\eta-\beta\xi)=v$. The proof establishing $Z^k = W^{k-1,p,\delta}\left(\dot{\Sigma}, \bigwedge\nolimits^{0,1}\tilde{u}^*T\widehat{W}\right)$ is exactly the proof given in \cite[Section~8.3]{SFTwendl}. Assume that $k = 1$. Since the operator
$$D_{\tilde{u}} \colon W^{1,p,\delta}\left(\dot{\Sigma},\tilde{u}^*T\widehat{W}\right) \rightarrow L^{p,\delta}\left(\dot{\Sigma}, \bigwedge\nolimits^{0,1}\tilde{u}^*T\widehat{W}\right)$$
is a Fredholm for any $\delta>0$ with $-\delta \not\in \sigma(A_z)$, the image of $D_{\tilde{u}}$ is closed and has finite codimension. Hence, the subspace $Z^1$ is closed in $L^{p,\delta} \left(\dot{\Sigma}, \bigwedge\nolimits^{0,1} \tilde{u}^*T\widehat{W}\right)$. By the Hahn--Banach theorem, there exists $\theta \in L^{q,-\delta} \left(\dot{\Sigma}, \bigwedge\nolimits^{0,1} \tilde{u}^*T\widehat{W}\right)$ that annihilates $Z^1$, where $\frac{1}{p} + \frac{1}{q} = 1$. This implies 

$$\left(D_{\tilde{u}}(\eta), \theta\right)_{L^{2}} = 0 \mathrm{\ for\ all\ } \eta \in W^{1,p,\delta} \left(\dot{\Sigma},\tilde{u} ^* T \widehat{W})\right),$$
and 
$$\left(Y(\tilde{u}) \circ d\tilde{u} \circ j_{\Sigma} , \theta \right)_{L^2} = 0 \mathrm{\ for\ all\ } Y \in T_J J^{\epsilon}_O.$$

The first relation implies that $\theta$ is a weak solution to the formal adjoint equation $D^*_{\tilde{u}} \theta = 0$. By elliptic regularity and the similarity principle, we conclude that $\theta$ is smooth function with isolated zeroes. Next, we will demonstrate that the existence of an injective point $z_0 \in \dot{\Sigma}$ satisfying $\tilde{u} (z_0) \in O$ contradicts the second relation.\\

We assume without loss of generality that $\theta(z_0)\neq 0$ for an injective point $z_0 \in \dot{\Sigma}$ satisfying $\tilde{u} (z_0) \in O$. Since $d\tilde{u} (z_0) \neq 0$, employing a standard lemma from symplectic linear algebra \cite[Lemma~3.2.2]{mcduff2012j} helps to find a smooth section $Y \in T_{J} J^{\epsilon}_O$. This section’s value at $\tilde{u}(z_0)$ is specifically chosen so that $Y(\tilde{u}) \circ d\tilde{u} \circ j= \theta$ at $z_0$.
This choice implies that the pointwise inner product $\left(Y(\tilde{u}) \circ d\tilde{u} \circ j, \theta\right)$ is positive in some neighborhood of $z_0$. One can then multiply $Y$ by a bump function to produce a section (still denoted by $Y$) of class $C_{\epsilon}$ and that the pointwise inner product of $\left(Y(\tilde{u}) \circ d\tilde{u} \circ j, \theta \right)$ is positive near $z_0$ but vanishes everywhere else. Indeed, this construction hinges upon the assumption that $z_0$ is an injective point. This contradicts the second condition in and thus completes the proof for $k= 1$.\\

Lets proceed to the case $k>1$. Suppose $\alpha \in W^{k-1,p,\delta}\left(\dot{\Sigma}, \bigwedge\nolimits^{0,1}\tilde{u}^*T\widehat{W}\right)$. In the case of $k=1$, surjectivity implies the existence of $\eta \in W^{1,p,\delta}$ and
$Y \in T_{J} J^{\epsilon}_U$ with $D_{\tilde{u}} \eta + \frac{1}{2}Y(\tilde{u}) \circ d\tilde{u} \circ j = \alpha$. Then, by elliptic regularity $\eta\in W^{k,p,\delta}$. This proves the surjectivity for arbitrary $k\in \mathbb{N}$ and $p\in(1,\infty)$.

\end{proof}

Now, applying the usual Sard-Smale argument we can show there exist a comeagre set $\cJ^1_{\mathrm{reg},z} \subset \cJ^{\epsilon}_{O}$ such that $e_{\lambda,z}^{-1}(0) \subset \cM^*(\widehat{W},J)$ is a submanifold for every $J \in \cJ^1_{\mathrm{reg},z}$ and denote this submanifold by $\cM_{\lambda'}^z(\widehat{W},J)$ where $\lambda'$ is the eigenvalue immediately below $\lambda$. Note that the asymptotic representative $(U,\psi)$ the pseudoholomorphic curves in $\cM_{\lambda'}^z(\widehat{W},J)$ takes the form 
$$U(s, t) = e^{\mu s} [e(t) + r (s, t)]$$
where $\mu \leq \lambda'$ and therefore the notation $\cM_{\lambda'}^z(\widehat{W},J)$ is justified. The elements within the tangent space of $\cM_{\lambda'}^z(\widehat{W},J)$ also fulfill a similar asymptotic behaviour.\\

On the submanifold $\cM^z_{\lambda'}(\widehat{W},\cJ^{\epsilon}_O)\coloneqq e_{\lambda,z}^{-1}(0)\subset \cM^*(\widehat{W},\cJ^{\epsilon}_O)$ of the universal moduli space, we define an another evaluation map $e_{\lambda',z}$ into the eigenspace $E_{\lambda',z}$. Let $(U, \psi)$ be the asymptotic representative of the pair $(\tilde{u},J) \in \cM^z_{\lambda'}(\widehat{W},\cJ^{\epsilon}_O)$ satisfying
$$U(s, t) = e^{\mu s} [e(t) + r (s, t)]$$
where $\mu \leq \lambda'< \lambda$. We define the evaluation map $e_{\lambda',z}$ as follows,

\[ e_{\lambda',z}(u,J)= \begin{cases} 
      e(t) & \mathrm{if\ } \mu=\lambda'\\
      0 & \mathrm{if\ } \mu <\lambda'
   \end{cases}.
\]

The proofs of Lemma~\ref{local_existence} and Theorem~\ref{regularity of evaluation map} can be easily adapted with very minimal changes to demonstrate the submersion of the evaluation map $e_{\lambda',z}$. By applying the Sard--Smale theorem again, we obtain a comeagre set $\cJ^2_{\mathrm{reg},z} \subset \cJ^1_{\mathrm{reg},z} \subset \cJ^{\epsilon}_{O}$ such that for every $J\in \cJ^2_{\mathrm{reg},z}$, we have submanifolds
$$\cM^*(\widehat{W},J) = \cM^z_{\lambda}(\widehat{W},J) \supset \cM^z_{\lambda'}(\widehat{W},J)\supset \cM^z_{\lambda{''}}(\widehat{W},J).$$

In this way, we can recursively define a sequence of maps $e_{\lambda_i,z}:\cM^z_{\lambda_{i}}(\widehat{W},\cJ^{\epsilon}_O) \rightarrow E_{\lambda_{i},z}$ for every negative eigenvalue $0>\lambda_1>\lambda_2 > \dots$ of the asymptotic operator $A_z$. Furthermore, we obtain co-meagre sets $\cJ^{\epsilon}_{O} \supset \cJ^1_{\mathrm{reg},z} \supset \cJ^2_{\mathrm{reg},z} \dots$ such that for every $ J$ in the comeagre subset $\bigcap\limits_i \cJ^i_{\mathrm{reg},z}$, we have a filtration of the moduli space $\cM^*(\widehat{W},J)$,
$$\cM^*(\widehat{W},J) = \cM^z_{\lambda_1}(\widehat{W},J) \supset \cM^z_{\lambda_2}(\widehat{W},J) \supset \cM^z_{\lambda_3}(\widehat{W},J) \dots $$

We will now apply the Taubes's trick\cite[Section~7.7]{SFTwendl} to this result on $C_{\epsilon}$-space to obtain the following theorem,

\begin{theorem} \label{stratification_symplectization}
For any $z\in \Gamma_+$, there exist a comeagre subset $\cJ_{\mathrm{reg},z} \subset \cJ_O$ such that for every $J \in \cJ_{\mathrm{reg},z}$, the moduli space $\cM^*(\widehat{W},J)$ admits a filtration 
$$\cM^*(\widehat{W},J) = \cM^z_{\lambda_1}(\widehat{W},J) \supset \cM^z_{\lambda_2}(\widehat{W},J) \supset \cM^z_{\lambda_3}(\widehat{W},J) \dots $$
where $0>\lambda_1> \lambda_2> \dots $ are the negative eigenvalues of $A_z$. A pseudoholomorphic curve $\tilde{u}\in \cM^*(\widehat{W},J)$ belongs to $\cM^z_{\lambda_i}(\widehat{W},J)$ if and only if its asymptotic representative $(U, \psi)$ takes the form
$$U(s, t) = e^{\mu s} [e(t) + r (s, t)]$$
where $\mu \leq \lambda_i$.
\end{theorem}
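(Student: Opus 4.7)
The plan is to apply Taubes's trick, following \cite[Lemma~8.16]{SFTwendl}, to promote the comeagre subset $\bigcap_i \cJ^i_{\mathrm{reg},z}$ of the Banach manifold $\cJ_U^{\epsilon}$ obtained above to a comeagre subset of the Fr\'echet space $\cJ_U$. Comeagreness does not automatically transfer along the continuous inclusion $\cJ_U^{\epsilon} \hookrightarrow \cJ_U$, so the strategy is instead to rewrite the desired filtration condition as a countable intersection of open and dense conditions directly in $\cJ_U$, and then invoke the Baire category theorem there.

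Concretely, I would fix a countable exhaustion $\{K_n\}_{n \geq 1}$ by precompact subsets of an ambient space of parametrized asymptotically cylindrical maps that contains every moduli space $\cM^*(\widehat{W}, J)$, ensuring $\cM^*(\widehat{W}, J) = \bigcup_n (K_n \cap \cM^*(\widehat{W}, J))$ for every $J \in \cJ_U$. For each pair $(i, n)$, let $\cJ_{i,n} \subset \cJ_U$ consist of those $J$ for which the evaluation map $e_{\lambda_i,z}$ is transverse to zero at every $\tilde{u} \in \cM^z_{\lambda_{i-1}}(\widehat{W}, J) \cap K_n$, with the convention $\cM^z_{\lambda_0}(\widehat{W}, J) := \cM^*(\widehat{W}, J)$. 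Openness of $\cJ_{i,n}$ in the $C^\infty$-topology follows from precompactness of $K_n$ together with continuous dependence of the linearized Cauchy--Riemann operator $D_{\tilde{u}}$ and of $e_{\lambda_i,z}$ on both $J$ and $\tilde{u}$. For density, fix $J_0 \in \cJ_U$ and a $C^\infty$-neighborhood $\cV$ of $J_0$, then choose a sequence $\epsilon = (\epsilon_\ell)$ decaying rapidly enough that the entire $C_\epsilon$-ball about $J_0$ embeds into $\cV$ via the inclusion $\cJ_U^{\epsilon} \hookrightarrow \cJ_U$; applying the recursive Sard--Smale argument of the preceding paragraphs to Theorem~\ref{regularity of evaluation map} on this ball yields a comeagre (hence dense) set of $J$'s satisfying the transversality condition on all of $\cM^z_{\lambda_{i-1}}(\widehat{W}, J)$, and in particular on the precompact piece $K_n$.

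Finally, set $\cJ_{\mathrm{reg}, z} := \bigcap_{i, n} \cJ_{i, n}$, which is comeagre in $\cJ_U$ by the Baire category theorem. For any $J$ in this intersection and any $\tilde{u} \in \cM^*(\widehat{W}, J)$, picking $n$ with $\tilde{u} \in K_n$, the transversality of $e_{\lambda_i, z}$ at each stratum cuts out every $\cM^z_{\lambda_i}(\widehat{W}, J)$ as a smooth submanifold and recursively places $\tilde{u}$ at the correct level of the filtration. The main obstacle is the density step: the decay rate of $\epsilon$ must be calibrated to $\cV$ so that the $C_\epsilon$-ball sits inside $\cV$, and one needs the strata $\cM^z_{\lambda_{i-1}}(\widehat{W}, J)$ to be closed under $C^\infty$-convergence toward deeper asymptotic decay so that the transversality guaranteed on the stratum descends to transversality on the Taubes exhaustion $K_n$. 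The openness step is routine once precompactness of $K_n$ is in hand, and once density and openness are both verified the Baire category argument is formal.
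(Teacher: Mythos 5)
Your proposal is correct and follows essentially the same route as the paper: the paper's proof is precisely an invocation of Taubes's trick (citing \cite[Lemma~8.16]{SFTwendl}) applied to the recursively obtained comeagre subsets of $\cJ_U^\epsilon$ from the preceding Sard--Smale steps, and your argument simply spells out the content of that cited lemma --- a countable exhaustion by precompacts, openness of the finite-level transversality conditions, density via embedding a suitably calibrated $C_\epsilon$-ball into any prescribed $C^\infty$-neighborhood and running Sard--Smale there, and a Baire category intersection. The only thing worth tightening is that $\cJ_{i,n}$ should really be stated so that the transversality conditions for all indices $j \le i$ hold jointly on $K_n$ (otherwise the stratum $\cM^z_{\lambda_{i-1}}(\widehat{W},J)$ on which you ask transversality of $e_{\lambda_i,z}$ need not a priori be a manifold), but this is a formulational adjustment rather than a gap.
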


Our next objective is to provide a filtration for the moduli space \(\mathcal{M}^*(M,J)\). Given an open subset $O\subset M$, a suitable class perturbation of a fixed almost complex structure $J_{\mathrm{fix}}$ is defined by
$$\cJ_O\coloneqq \left\{ J \in \cJ(M,H) \mid J=J_{\mathrm{fix}} \mathrm{\ on \ } \R \times (M\setminus O)\right\}.$$ 
However, there's a caveat: for the evaluation map to be well-defined on the universal moduli space, the complement of the open subset $U$ must contain the Reeb orbits involved in defining the moduli space. Once this condition is met, we achieve a well-defined evaluation map on the universal moduli space as the asymptotic operators remain unchanged under the $C_{\epsilon}$ small perturbation of almost complex structures.\\

With this resolved, the subsequent steps of the regularity proof follow a strategy very similar to that employed in Theorem~\ref{regularity of evaluation map}. The only difference lies in the fact that demonstrating that $Z^k=W^{k-1,p,\delta}\left(\dot{\Sigma}, \bigwedge\nolimits^{0,1}\tilde{u}^*T(\R \times M)\right)$ is little bit more complicated. But this has been described in great detail in \cite[Section~8.3]{SFTwendl}. Having established this, we arrive at the following theorem.

\begin{theorem} \label{stratification}
For any open subset $O\subset M$ whose complement contains the Reeb orbits and any $z\in \Gamma_+$, there exist a comeagre subset $\cJ_{\mathrm{reg},z} \subset \cJ_O$ such that for every $J \in \cJ_{\mathrm{reg},z}$, the moduli space $\cM^*(M,J)$ admits a filtration
$$\cM^*(M,J) = \cM^z_{\lambda_1}(M,J) \supset \cM^z_{\lambda_2}(M,J) \supset \cM^z_{\lambda_3}(M,J) \dots $$
where $0>\lambda_1> \lambda_2> \dots $ are the negative eigenvalues of $A_z$. A pseudoholomorphic curve $\tilde{u}\in \cM^*(M,J)$ belongs to $\cM^z_{\lambda_i}(M,J)$ if and only if its asymptotic representative $(U, \psi)$ takes the form
$$U(s, t) = e^{\mu s} [e(t) + r (s, t)]$$
where $\mu \leq \lambda_i$.
\end{theorem}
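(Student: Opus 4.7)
The plan is to mirror the structure of the cobordism case (Theorem~\ref{stratification_symplectization}) and to run the same recursive construction, using as the core analytic input a submersion statement for the evaluation maps on a universal moduli space, this time built from $C_\epsilon$-perturbations of cylindrical almost complex structures. The extra constraints in the symplectization setting (the $\R$-invariance of $J$ and hence the requirement that $U$ avoid the Reeb orbits) appear only in one place, namely the proof of surjectivity of the linearized universal operator.

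First, I would set up the universal moduli space
$$\cM(M,\cJ^\epsilon_U) \coloneqq \left\{(\tilde u,J) \;\middle|\; J \in \cJ^\epsilon_U,\ \tilde u \in \cM^*(M,J)\right\},$$
where $\cJ^\epsilon_U \subset \cJ_U$ is the Banach manifold of $C_\epsilon$-small perturbations of a fixed $J_{\mathrm{fix}}$, supported in $\R\times U$ and preserving the cylindrical form. Because $U$ avoids the Reeb orbits $\{\gamma_z\}_{z\in \Gamma}$, the bundles $\gamma_z^*\xi^H$ and the asymptotic operators $A_z$ are the same for every $J\in\cJ^\epsilon_U$. Hence the eigenspace $E_{\lambda,z}$ is a fixed target, and the evaluation map
$$e_{\lambda,z}\colon \cM(M,\cJ^\epsilon_U)\rightarrow E_{\lambda,z}, \qquad e_{\lambda,z}(\tilde u,J) = \lim_{s\to\infty} e^{-\lambda s} U(s,t),$$
is well-defined and smooth exactly as in the cobordism setting.

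The main step is showing that $e_{\lambda,z}$ is a submersion at every $(\tilde u, J)$ in its domain. I would copy the outline of Theorem~\ref{regularity of evaluation map} verbatim: use Lemma~\ref{local_existence} to produce a local section $\xi$ on a cylindrical neighborhood of $z$ with $(e_{\lambda,z})_*(\xi)=v$ and $D_{\tilde u}\xi$ decaying strictly faster than $\lambda$, cut it off by a bump function $\beta$, and reduce surjectivity to showing that
$$L\colon(\eta,Y)\longmapsto D_{\tilde u}\eta + \tfrac12\,Y(\tilde u)\circ d\tilde u\circ j_\Sigma$$
has image $W^{k-1,p,\delta}(\dot\Sigma,\bigwedge^{0,1}\tilde u^*T(\R\times M))$ for suitable $\delta>0$ with $-\delta\notin\sigma(A_z)$. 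By Fredholm of $D_{\tilde u}$, Hahn--Banach, and elliptic regularity, a nontrivial obstruction $\theta$ would be smooth with isolated zeros and orthogonal to every $Y(\tilde u)\circ d\tilde u \circ j_\Sigma$ with $Y\in T_J\cJ^\epsilon_U$.

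The hard part will be producing the contradiction at the injective point $z_0$, since $Y$ is now constrained to be $\R$-invariant and to preserve the splitting $TM = l_\omega\oplus\xi^H$. This is precisely why the definition of $\cM^*(M,J)$ requires not only $\tilde u(z_0)\in U$ but also $\pi_{\xi^H}\circ d\tilde u(z_0)\neq 0$: with this hypothesis one can, by the symplectic linear algebra argument carried out in Wendl's SFT notes \cite[Section 8.3.3]{SFTwendl}, choose a cylindrical $Y$ prescribing $Y(\tilde u)\circ d\tilde u\circ j_\Sigma$ equal to $\theta$ at $z_0$, cut it off by a bump function, and produce the required strict inequality on the $L^2$ pairing. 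I would invoke this argument directly rather than reproducing it. Elliptic bootstrapping then upgrades from $k=1$ to arbitrary $k$.

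Once submersion is established, the standard Sard--Smale argument produces comeagre sets $\cJ^1_{\mathrm{reg},z}\supset\cJ^2_{\mathrm{reg},z}\supset\cdots$ in $\cJ^\epsilon_U$, where for $J\in\cJ^i_{\mathrm{reg},z}$ the zero set $\cM^z_{\lambda_{i+1}}(M,J)\coloneqq e_{\lambda_i,z}^{-1}(0)$ is a submanifold of the preceding stratum, and one iterates by replacing $\cM^*(M,J)$ with this stratum and $A_z$'s next eigenvalue $\lambda_{i+1}$; the recursive definition of $e_{\lambda_{i+1},z}$ and the proof of its regularity are identical, since the tangent vectors to each stratum satisfy the same asymptotic expansion as the curves themselves by \cite[Theorem~A.1]{Siefinter2008}. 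Intersecting over $i$ gives a comeagre subset of $\cJ^\epsilon_U$ realizing the full filtration. Finally, Taubes's trick \cite[Lemma~8.16]{SFTwendl} transfers this comeagre subset from $\cJ^\epsilon_U$ to the Fr\'echet manifold $\cJ_U$, exhausting $\cJ_U$ by a countable collection of $C_\epsilon$-balls and using that a countable intersection of comeagre sets is comeagre. This yields the desired $\cJ_{\mathrm{reg},z}\subset\cJ_U$.
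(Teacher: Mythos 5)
Your proposal matches the paper's own treatment: it sets up the same universal moduli space over $C_\epsilon$-perturbations of cylindrical almost complex structures, invokes the same caveat that $U$ must avoid the Reeb orbits so the asymptotic operators and eigenspaces are fixed, reduces to the surjectivity argument as in Theorem~\ref{regularity of evaluation map}, identifies (as the paper does, by citation to \cite[Section~8.3.3]{SFTwendl}) that the only new difficulty lies in producing the contradiction at the injective point under the $\R$-invariance constraint using the hypothesis $\pi_{\xi^H}\circ d\tilde u(z_0)\neq 0$, and then iterates with Sard--Smale and Taubes's trick. Both approach and key references are identical; your write-up is in fact somewhat more explicit than the paper's sketch.
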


The filtration provided by Theorems~\ref{stratification_symplectization} and \ref{stratification} has a positive codimension of either 1 or 2 at each step, depending on the dimension of the eigenspaces of the asymptotic operators. These filtrations naturally induces a stratification of the moduli space, where the strata are defined by successive complements $\cM^z_{\lambda_k}(M,J) \setminus \cM^z_{\lambda_{k+1}}(M,J)$.

\section{Subsets of positive asymptotic intersection index} \label{section: Subsets of positive asymptotic intersection index}

In this section, we would like to characterise the subsets where the asymptotic self-intersection indices vanishes and the subsets where the asymptotic contribution to the intersection number vanishes using the stratification given above.

\subsection{Positivity of asymptotic self intersection index}
Recall that the self-intersection index is $\delta_{\infty}(\tilde{u},z) =\frac{1}{2}(\Delta_1+\Delta_2)$ and hence it will be sufficient if we could specify precisely the subset where $\Delta_1$ and $\Delta_2$ vanish. We assume that $m_z>1$, otherwise $\Delta_1$ and $\Delta_2$ identically vanish. The stratification constructed above along with Lemma~\ref{self-intersction=0} helps in describing these subsets.\\

The theme of this subsection would be to present a detailed description of the subsets where $\Delta_1$ and $\Delta_2$ are positive followed by a simplified theorem suitable for applications.

\begin{lemma}
Given $z\in \Gamma_+$, $J \in \cJ_{\mathrm{reg},z}$ and a filtration as in the Theorem~\ref{stratification}
$$\cM^*(\widehat{W},J) = \cM^z_{\lambda_1}(\widehat{W},J) \supset \cM^z_{\lambda_2}(\widehat{W},J) \supset \cM^z_{\lambda_3}(\widehat{W},J) \dots $$
where $0>\lambda_1> \lambda_2> \dots $ are the negative eigenvalues of $A_z$, we have the following:

\begin{enumerate}[topsep=5pt,itemsep=5pt]
    \item[Case 1:] $p(\gamma_z^{m_z})=0$ and $\overline{\sigma}(\gamma_z) \neq m_z$, then $\dim(E_{\lambda_1})=1$ and the submanifold $\cM^z_{\lambda_2}(\widehat{W},J)$ is precisely the subset with $\Delta_1>0$.
    \item[Case 2:] $p(\gamma_z^{m_z})=0$ and $\overline{\sigma}(\gamma_z) = m_z$, then $\dim(E_{\lambda_1})=1$. 
    \begin{itemize}[topsep=5pt,itemsep=5pt]
     \item if $\dim(E_{\lambda_2})=\dim(E_{\lambda_3})=1$, then the submanifold $\cM^z_{\lambda_4}(\widehat{W},J)$ is precisely the subset with $\Delta_1>0$.
    \item if $\dim(E_{\lambda_2})=2$, then the submanifold $\cM^z_{\lambda_3}(\widehat{W},J)$ is precisely the subset with $\Delta_1>0$.
    \end{itemize}
    \item[Case 3:] $p(\gamma_z^{m_z})=1$ and $\overline{\sigma}(\gamma_z) \neq m_z$
    \begin{itemize}[topsep=5pt,itemsep=5pt]
    \item If $\dim(E_{\lambda_1})=2$, then the submanifold $\cM^z_{\lambda_2}(\widehat{W},J)$ is precisely the subset with $\Delta_1>0$.
    \item If $\dim(E_{\lambda_1})=\dim(E_{\lambda_2})=1$, then the submanifold $\cM^z_{\lambda_3}(\widehat{W},J)$ is precisely the subset with $\Delta_1>0$.
    \end{itemize}
    \item[Case 4:] $p(\gamma_z^{m_z})=1$ and $\overline{\sigma}(\gamma_z) = m_z$
    \begin{itemize}[topsep=5pt,itemsep=5pt]
    \item If $\dim(E_{\lambda_1})=2$;
    \begin{itemize}[topsep=5pt,itemsep=5pt]
        \item if $\dim(E_{\lambda_2})=\dim(E_{\lambda_3})=1$, then the submanifold $\cM^z_{\lambda_4}(\widehat{W},J)$ is precisely the subset with $\Delta_1>0$.
        \item if $\dim(E_{\lambda_2})=2$, then the submanifold $\cM^z_{\lambda_3}(\widehat{W},J)$ is precisely the subset with $\Delta_1>0$.
    \end{itemize}
    \item If $\dim(E_{\lambda_1})=\dim(E_{\lambda_2})=1$;
    \begin{itemize}[topsep=5pt,itemsep=5pt]
        \item if $\dim(E_{\lambda_3})=\dim(E_{\lambda_4})=1$, then the submanifold $\cM^z_{\lambda_5}(W,J)$ is precisely the subset with $\Delta_1>0$.
        \item if $\dim(E_{\lambda_3})=2$, then the submanifold $\cM^z_{\lambda_4}(\widehat{W},J)$ is precisely the subset with $\Delta_1>0$.
    \end{itemize}
\end{itemize}

\end{enumerate}

\end{lemma}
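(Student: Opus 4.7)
The plan is to translate the conditions in Lemma~\ref{self-intersction=0} for $\Delta_1(\tilde{u}; z) = 0$ into conditions on the leading exponential rate of the asymptotic expansion of $\tilde{u}$ at $z$, and then recognize the corresponding subsets as unions of strata in the filtration from Theorem~\ref{stratification}. Since $m_z > 1$ by hypothesis, Lemma~\ref{self-intersction=0} reduces $\Delta_1 = 0$ to two possibilities: $d_0(\tilde{u}; z) = 0$, or $d_0(\tilde{u}; z) = 1$ together with $\overline{\sigma}(\gamma_z^{m_z}) = m_z$. Thus it is enough to compute the constant value of $d_0 = \alpha^{\Phi}(\gamma_z^{m_z}) - \wind_{\infty}^{\Phi}(\tilde{u}; z)$ on each stratum. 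By Theorem~\ref{stratification}, a curve in $\cM^z_{\lambda_i}(\widehat{W},J) \setminus \cM^z_{\lambda_{i+1}}(\widehat{W},J)$ has leading asymptotic rate $\mu = \lambda_i$ and leading eigenvector in $E_{\lambda_i}$, so $\wind_{\infty}^{\Phi}(\tilde{u}; z) = w(\lambda_i, [\Phi])$ there.

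The main computational input is the structure of the winding function given by Lemma~\ref{asymptotic_dimension}: $w$ is monotonic on the eigenvalues (counted with multiplicity) and every integer winding value is achieved with total multiplicity exactly $2$. The parity $p(\gamma_z^{m_z})$ records how the two winding-$\alpha$ slots are distributed: $p = 0$ means one slot is taken by a positive eigenvalue and the other by a negative one, while $p = 1$ means both are negative. Using this together with the prescribed dimensions $\dim E_{\lambda_i} \in \{1, 2\}$, one reads off $w(\lambda_1) = \alpha$ and then $w(\lambda_2), w(\lambda_3), \ldots$ by ``spending'' the budget of $2$ at each integer value, down to the first $i$ with $w(\lambda_i) \leq \alpha - 2$ (at which point $d_0 \geq 2$, forcing $\Delta_1 > 0$). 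For example, in Case 1 we have $p = 0$ and $\dim E_{\lambda_1} = 1$, so the remaining winding-$\alpha$ slot is taken by a positive eigenvalue, forcing $w(\lambda_2) = \alpha - 1$; hence $d_0 = 0$ on the top stratum and $d_0 \geq 1$ strictly below, and since $\overline{\sigma} \neq m_z$ the second clause of Lemma~\ref{self-intersction=0} never applies, so $\Delta_1 > 0$ precisely on $\cM^z_{\lambda_2}(\widehat{W},J)$.

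The remaining three cases follow by the same bookkeeping: peel off from the top of the filtration the strata where $d_0 = 0$ (these always give $\Delta_1 = 0$), and, when $\overline{\sigma}(\gamma_z^{m_z}) = m_z$, also the strata where $d_0 = 1$. The internal sub-case structure in Cases 2 and 4 arises precisely from whether the winding-$(\alpha - 1)$ budget is exhausted at a single eigenvalue with $\dim E_{\lambda} = 2$, or split between two consecutive eigenvalues each with $\dim E_{\lambda} = 1$; this controls the index at which $d_0$ jumps from $1$ to $\geq 2$ and therefore the index at which the boundary between $\Delta_1 = 0$ and $\Delta_1 > 0$ falls in the filtration. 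The main obstacle, therefore, is not analytic but combinatorial: one must carefully track how the parity and the eigenspace dimensions combine to determine the transitions of $d_0$, and then match the resulting partition of the filtration against the four cases (and their sub-cases) listed in the statement.
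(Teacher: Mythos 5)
Your proposal is correct and follows essentially the same route the paper indicates: the paper's proof consists of the single remark that the lemma ``follows easily from Theorem~\ref{stratification} and Lemma~\ref{asymptotic_dimension},'' and your argument is precisely that derivation spelled out, using Lemma~\ref{self-intersction=0} to reduce $\Delta_1 = 0$ to conditions on $d_0$ and then tracking the value of $d_0$ stratum-by-stratum via the winding budget of Lemma~\ref{asymptotic_dimension}. The Case 1 computation you carry out in detail is accurate, and the bookkeeping you describe for Cases 2--4 matches the paper's enumeration of sub-cases.
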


The proof of the above lemma follows easily from Theorem~\ref{stratification} and Lemma~\ref{asymptotic_dimension}. The following theorem is a simplified and easy-to-apply version of the above lemma.

\begin{theorem}\label{delta1=0}
For every $z\in \Gamma_+$, $J \in \cJ_{\mathrm{reg},z}$ and a filtration as in the Theorem~\ref{stratification}, the submanifold $\cM^*(\widehat{W},J)\setminus \cM^z_{\lambda_2}(\widehat{W},J)$ is an open dense subset of the moduli space $\cM^*(\widehat{W},J)$ such that $\Delta_1=0$ for every pseudoholomorphic curve $\tilde{u}$ in $\cM^*(\widehat{W},J)\setminus \cM^z_{\lambda_2}(\widehat{W},J)$.
\end{theorem}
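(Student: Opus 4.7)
The plan is to identify the open stratum $\cM^*(\widehat{W},J)\setminus \cM^z_{\lambda_2}(\widehat{W},J)$ concretely as the locus of curves whose leading asymptotic eigenvalue at $z$ equals the top eigenvalue $\lambda_1 = \sigma_{\max}^-(\gamma_z^{m_z})$, then read off that $d_0(\tilde{u};z) = 0$ for such curves directly from the winding characterization of eigenvectors in Lemma~\ref{asymptotic_dimension}, and finally invoke Lemma~\ref{self-intersction=0} to conclude $\Delta_1(\tilde{u};z) = 0$. Openness and density are then immediate from the fact that $\cM^z_{\lambda_2}(\widehat{W},J)$ is cut out as the zero set of the submersion $e_{\lambda_1,z}$ and hence is a closed submanifold of positive codimension.

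More precisely, by Theorem~\ref{stratification} a curve $\tilde{u}\in\cM^*(\widehat{W},J)$ lies in $\cM^z_{\lambda_2}(\widehat{W},J)$ exactly when its asymptotic representative $(U,\psi)$ satisfies $U(s,t) = e^{\mu s}[e(t)+r(s,t)]$ with $\mu \leq \lambda_2 < \lambda_1$. So for $\tilde{u}$ in the complement one has $\mu=\lambda_1$, and $e(t)=e(\tilde{u};z)$ is a nonzero eigenvector of $A_{\gamma_z^{m_z},J}$ with eigenvalue $\sigma_{\max}^-(\gamma_z^{m_z})$. Applying Lemma~\ref{asymptotic_dimension}(1) for any unitary trivialization $\Phi$ then yields
\[
\wind^{\Phi}_{\infty}(\tilde{u};z) \;=\; \wind(\Phi^{-1}e) \;=\; w(\lambda_1,[\Phi]) \;=\; \alpha^{\Phi}(\gamma_z^{m_z}),
\]
so $d_0(\tilde{u};z) = \alpha^{\Phi}(\gamma_z^{m_z}) - \wind^{\Phi}_{\infty}(\tilde{u};z) = 0$. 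By the second bullet of Lemma~\ref{self-intersction=0}, this already forces $\Delta_1(\tilde{u};z)=0$, which is the pointwise content of the theorem.

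For the topological claim, Theorem~\ref{regularity of evaluation map} together with the Sard--Smale argument following it shows that for $J\in \cJ_{\mathrm{reg},z}$ the evaluation $e_{\lambda_1,z}\colon \cM^*(\widehat{W},J)\to E_{\lambda_1,z}$ is a smooth submersion, so its zero set $\cM^z_{\lambda_2}(\widehat{W},J)$ is a closed submanifold of positive codimension $\dim E_{\lambda_1,z}\in\{1,2\}$ (nonzero by Lemma~\ref{asymptotic_dimension}(3)). Closed submanifolds of positive codimension in a manifold have open and dense complement, and this completes the argument.

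The main obstacle here is essentially bookkeeping rather than analysis: the hard analytic content — surjectivity of the linearized Cauchy--Riemann operator coupled to variations of $J$, combined with the Sard--Smale and Taubes's trick steps that give the comeagre set $\cJ_{\mathrm{reg},z}$ — has already been packaged in Theorem~\ref{regularity of evaluation map} and Theorem~\ref{stratification}. All that remains is to align the definition of the filtration with the definition of $d_0$, which is immediate once one unwinds the winding characterization of the top eigenspace.
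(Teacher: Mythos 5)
Your proof is correct and follows essentially the same route as the paper: the paper derives the statement from a preceding case-analysis lemma that is itself deduced from Theorem~\ref{stratification} and Lemma~\ref{asymptotic_dimension}, while you shortcut the case analysis by observing directly that a curve off $\cM^z_{\lambda_2}(\widehat{W},J)$ has leading eigenvalue $\lambda_1=\sigma_{\max}^-(\gamma_z^{m_z})$, so $d_0(\tilde u;z)=0$ by Lemma~\ref{asymptotic_dimension}(1) and then $\Delta_1=0$ by Lemma~\ref{self-intersction=0}. The open-and-dense conclusion from $\cM^z_{\lambda_2}(\widehat{W},J)$ being a closed submanifold of positive codimension (regular zero level of $e_{\lambda_1,z}$ for $J$ in the comeagre set) matches the paper's stratification argument.
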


This completes the description of subsets with $\Delta_1=0$ and we will now begin to describe the subsets with $\Delta_2=0$. Given a trivialization $\Phi$ of $\gamma_z^* \xi^{H}$, $\Delta_2 (\tilde{u}; z) = 0$  if and only if 
\begin{itemize}[topsep=5pt,itemsep=5pt]
    \item if $\cov(e_1)=\gcd(\wind^{\Phi}(e_1),m_z)=1$ or
    \item if $\wind(e_1)-\wind(e_2)=1$; this implies\\ $\gcd(\cov(e_1), \cov(e_2))=\gcd(\wind^{\Phi}(e_1),\wind^{\Phi}(e_2),m_z)=1$.
\end{itemize}

Though a unitary trivialization is needed to give the conditions, the conditions themselves are independent of the trivialization. This statement is a reformulation of Lemma~\ref{self-intersction=0} and can be easily derived from the proof provided in \cite[Lemma~3.14]{Siefinter2009}. With the above filtration of the moduli space in mind, we will give the following definitions which will be useful in characterizing the subsets with $\Delta_2>0$.

\begin{definition}
Given $\lambda \in \sigma(A_{z})$ with $\lambda<0$ and $\dim(E_\lambda)=2$, we define $G_{\lambda}$ as 
$$G^z_{\lambda}:=\cM^z_{\lambda}(\widehat{W},J)\setminus \cM^z_{\mu}(\widehat{W},J),$$
where $\mu \in \sigma(A_z)$ is the eigenvalue immediately preceding $\lambda$. If $p(\gamma_z^{m_z})=0$, and $\lambda = \sigma_{\max}^-(\gamma_z^{m_z})$ then we have $\dim(E_\lambda)=1$. We then define 
$$G^z_{\lambda}:=\cM^z_{\lambda}(\widehat{W},J)\setminus \cM^z_{\mu}(\widehat{W},J),$$
where $\mu$ is defined as before.
Given a trivialization $\Phi$ of $\gamma_z^* \xi^{H}$ and $\lambda_1, \lambda_2 \in \sigma(A_z)$ with $0>\lambda_1 > \lambda_2$, $\dim{E_{\lambda_1}}=\dim{E_{\lambda_2}}=1$ and $\wind^{\Phi}(E_{\lambda_1})=\wind^{\Phi}(E_{\lambda_2})$. We define $G^z_{\lambda_1}=G^z_{\lambda_2}$ as
$$G^z_{\lambda_1}=G^z_{\lambda_2}:=\cM^z_{\lambda_1}(\widehat{W},J)\setminus \cM_{\mu}(\widehat{W},J),
$$ where $\mu \in \sigma(A_z)$ is the eigenvalue immediately preceding $\lambda_2$.
\end{definition}

By the reformulation of Lemma~\ref{self-intersction=0} given above, every curve $u \in G^z_{\lambda}$ for $\lambda \in \sigma(A_z)$ with $\cov(E_{\lambda})=1$ has $\Delta_2=0$. Thus every curve in the closed subset $\Omega_1$ defined below has $\Delta_2=0$,

$$\Omega_1:=\bigcup_{\substack{\lambda \in \sigma(A_z)\\ \cov(E_{\lambda})=1}} G^z_{\lambda}.$$\\
\vspace{1pt}

We would like to characterize the curves whose asymptotic representative near $z$, as described in Equation~\ref{asymp_rep2}, has two terms and the winding of the eigenvectors appearing in this formula differ precisely by 1. This requires considering following four cases;

\begin{enumerate}[topsep=5pt,itemsep=5pt]
    \item Let $\lambda \in \sigma(A_z)$ with $\cov(E_\lambda)\neq 1$. If $\lambda=\sigma_{\max}^{-}(\gamma_z^{m_z})$ and $p(\gamma_z^{m_z})=0$ or $\dim(E_{\lambda})=2$, with $\dim(E_{\mu})=2$, where $\mu \in \sigma(A_z)$ is the immediate eigenvalue preceding $\lambda$, then we can define the evaluation map 
    $$f_\mu :\cN^z_{\lambda}:=\cM^z_{\lambda}(\widehat{W},J) \setminus \cM^z_{\mu}(\widehat{W},J) \rightarrow E_{\mu},$$ given by \cref{asymp_rep2}.  It follows that $\cP^z_{\lambda}:=\cN^z_{\lambda} \setminus f_{\mu}^{-1}(0)$ has $\Delta_2=0$.
    
    \item Let $\lambda \in \sigma(A_z)$ with $\cov(E_\lambda)\neq 1$. If $\lambda=\sigma_{\max}^{-}(\gamma_z^{m_z})$ and $p(\gamma_z^{m_z})=0$ or $\dim(E_{\lambda})=2$, with $\dim(E_{\mu_1})=\dim(E_{\mu_2})=1$, where $\mu_1,\mu_2 \in \sigma(A_z)$, $\lambda>\mu_1>\mu_2$ with no eigenvalues between them.  Now, we can define the evaluation maps 
    \begin{equation*}
        \begin{split}
            f_{\mu_1} &:\cN^z_{\lambda}:=\cM^z_{\lambda}(\widehat{W},J) \setminus \cM^z_{\mu_1}(\widehat{W},J) \rightarrow E_{\mu_1},\\
            f_{\mu_2} &:\cN^z_{\lambda}:=\cM^z_{\lambda}(\widehat{W},J) \setminus \cM^z_{\mu_1}(\widehat{W},J) \rightarrow E_{\mu_2},
        \end{split}
    \end{equation*}
     given by \cref{asymp_rep2}.  It follows easily that $\cP^z_{\lambda}:=\cN^z_{\lambda} \setminus \left\{f_{\mu_1}^{-1}(0)\cup f_{\mu_2}^{-1}(0)\right\}$ has $\Delta_2=0$.
    
    \item Let $\lambda_1,\lambda_2 \in \sigma(A_z)$ with $0>\lambda_1>\lambda_2$, $\dim(E_{\lambda_1})=\dim(E_{\lambda_2})=1$, $\wind^{\Phi}(E_{\lambda_1})=\wind^{\Phi}(E_{\lambda_2})$, $\cov(E_{\lambda_1})=\cov(E_{\lambda_2}) \neq 1$  and $\dim(E_{\mu})=2$, where $\mu \in \sigma(A_z)$, $\lambda_1>\lambda_2>\mu$ with no eigenvalues in between. 
    Now, we can define the evaluation maps 

    \begin{equation*}
        \begin{split}
            f_\mu &:\cN^z_{\lambda_1}:=\cM^z_{\lambda_1}(\widehat{W},J) \setminus \cM^z_{\lambda_2}(\widehat{W},J) \rightarrow E_{\mu},\\
            g_\mu &:\cN^z_{\lambda_2}:=\cM^z_{\lambda_2}(\widehat{W},J) \setminus \cM^z_{\mu}(\widehat{W},J) \rightarrow E_{\mu},
        \end{split}
    \end{equation*}
     given by \cref{asymp_rep2}.  It follows easily that $\cP^z_{\lambda}:=\left(\cN^z_{\lambda_1} \setminus g_{\mu}^{-1}(0)\right)\sqcup \left(\cN^z_{\lambda_2} \setminus g_{\mu}^{-1}(0)\right)$ has $\Delta_2=0$.
    
    \item Let $\lambda_1, \lambda_2 \in \sigma(A_z)$ with $0>\lambda_1>\lambda_2$, $\dim(E_{\lambda_1})=\dim(E_{\lambda_2})=1$, $\wind^{\Phi}(E_{\lambda_1})=\wind^{\Phi}(E_{\lambda_2})$, $\cov(E_{\lambda_1})=\cov(E_{\lambda_2}) \neq 1$ and $\dim(E_{\mu_1})=\dim(E_{\mu_2})=1$, where $\mu_1,\mu_2 \in \sigma(A_z)$, $\lambda_1> \lambda_2>\mu_1>\mu_2$ with no eigenvalues between them.  Now, we can define the evaluation maps 
    \begin{equation*}
        \begin{split}
            f_{\mu_1} &:\cN^z_{\lambda_1}:=\cM^z_{\lambda_1}(\widehat{W},J) \setminus \cM^z_{\lambda_2}(\widehat{W},J) \rightarrow E_{\mu_1},\\ 
            g_{\mu_1} &:\cN^z_{\lambda_2}:=\cM^z_{\lambda_2}(\widehat{W},J) \setminus \cM^z_{\mu_1}(\widehat{W},J) \rightarrow E_{\mu_1},\\
            f_{\mu_2} &:\cN^z_{\lambda_1}:=\cM^z_{\lambda_1}(\widehat{W},J) \setminus \cM^z_{\lambda_2}(\widehat{W},J) \rightarrow E_{\mu_2},\\
            g_{\mu_2} &:\cN^z_{\lambda_2}:=\cM^z_{\lambda_2}(\widehat{W},J) \setminus \cM^z_{\mu_1}(\widehat{W},J) \rightarrow E_{\mu_2},
        \end{split}
    \end{equation*}

     given by \cref{asymp_rep2}.  It follows easily that $\cP^z_{\lambda}$ defined as $$\cP^z_{\lambda}:=\left(\cN^z_{\lambda_1} \setminus \{f_{\mu_1}^{-1}(0)\cup f_{\mu_2}^{-1}(0)\}\right) \sqcup \left(\cN^z_{\lambda_2} \setminus \{g_{\mu_1}^{-1}(0)\cup g_{\mu_2}^{-1}(0)\}\right)$$ has $\Delta_2=0$.
\end{enumerate}

The evaluation maps $f_{\mu}$'s and $g_{\mu}$'s defined above using \cref{asymp_rep2} are given by explicit formulae for lower order exponents in the expansion of asymptotic representative in \cite[Lemma~3.1]{Siefinter2008}. This shows that $f_{\mu}$'s and $g_{\mu}$'s are well defined and smooth. The above mentioned cases exhaust the possibilities when $\cov(E_{\lambda})\neq 1$ and every curve in the closed subset $\Omega_2$
$$\Omega_2:=\bigcup_{\substack{\lambda \in \sigma(A_z)\\ \cov(E_{\lambda})\neq1}} \cP^z_{\lambda},$$
has $\Delta_2=0$. Thus any curve in $\Omega= \Omega_1 \sqcup \Omega_2$ has $\Delta_2=0$. 
We now present a theorem similar in spirit to Theorem~\ref{delta1=0} but a little more complicated to prove. However, the good thing is that the analysis needed for the proof of the theorem  has already been employed in Theorem~\ref{regularity of evaluation map}.

\begin{theorem} \label{delta2=0}
For every $z\in \Gamma_+$, there exist a comeagre subset of $\cJ_O$ such that for every $J$ in that comeagre subset, the subset of pseudoholomorphic curves in $\cM^*(\widehat{W},J)$ with $\Delta_2=0$ contains an open dense subset.
\end{theorem}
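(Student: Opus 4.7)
The plan is to combine the filtration from Theorem~\ref{stratification_symplectization} with further genericity conditions imposed on the ``second term'' evaluation maps $f_\mu$ and $g_\mu$ defined in the preceding discussion, and then to observe that the resulting subset $\Omega = \Omega_1 \cup \Omega_2$ contains an open dense portion of the top stratum of $\cM^*(\widehat{W},J)$.

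First I would make the construction of these additional evaluation maps rigorous at the level of the universal moduli space. By \cite[Lemma~3.1]{Siefinter2008} the coefficient of each term in the decreasing-exponential expansion \eqref{asymp_rep2} of the asymptotic representative is given by an explicit limit of the form $\lim_{s\to\infty} e^{-\mu s}\bigl[U(s,t) - (\text{previous terms})\bigr]$, and exactly as with $e_{\lambda,z}$ this produces a smooth map $\cN^z_\lambda(\widehat{W},\cJ^\epsilon_U)\to E_{\mu,z}$. A verbatim adaptation of the proof of Theorem~\ref{regularity of evaluation map}, using Lemma~\ref{local_existence} with the exponential weight chosen just below $\mu$ and the injective-point/similarity-principle/Hahn--Banach argument, shows that each such map is a submersion on the universal moduli space.

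Once submersivity is in hand, the Sard--Smale theorem gives, for each of the finitely many maps $f_\mu$ and $g_\mu$ attached to the Reeb orbit $\gamma_z^{m_z}$, a comeagre subset of $\cJ^\epsilon_U$ on which the corresponding zero locus is a smooth submanifold of positive codimension. Intersecting with the comeagre set of Theorem~\ref{stratification_symplectization} and then promoting from $\cJ^\epsilon_U$ to $\cJ_U$ via the Taubes trick \cite[Lemma~8.16]{SFTwendl} delivers the required comeagre set in $\cJ_U$. For any $J$ in this set, each $\cP^z_\lambda$ in Cases 1--4 is obtained from the open subset $\cN^z_\lambda\subset\cM^*(\widehat{W},J)$ by removing finitely many positive-codimension submanifolds.

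It remains to check that $\Omega$ contains an open dense subset. By Theorem~\ref{delta1=0} the stratum $G^z_{\lambda_1} = \cM^z_{\lambda_1}(\widehat{W},J)\setminus\cM^z_{\lambda_2}(\widehat{W},J)$ is itself open and dense in $\cM^*(\widehat{W},J)$. If $\cov(E_{\lambda_1})=1$ then $G^z_{\lambda_1}\subset \Omega_1$ and we are finished. Otherwise $G^z_{\lambda_1}$ coincides with $\cN^z_{\lambda_1}$ in one of Cases 1--4, and the corresponding $\cP^z_{\lambda_1}\subset\Omega_2$ is obtained from $G^z_{\lambda_1}$ by deleting the zero loci of one or two submersive maps $f_\mu$, each of positive codimension; consequently $\cP^z_{\lambda_1}$ is open and dense inside $G^z_{\lambda_1}$, and hence open and dense in $\cM^*(\widehat{W},J)$. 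The main obstacle is the submersivity statement for the maps $f_\mu$ and $g_\mu$: it requires redoing the argument of Theorem~\ref{regularity of evaluation map} on the constrained moduli spaces $\cN^z_\lambda$, where the linearized operator $D_{\tilde u}$ must be analyzed on tangent vectors with prescribed leading asymptotic terms, while keeping track of the $L^{p,\delta}$ weight so that Lemma~\ref{local_existence} produces the correct exponentially decaying correction term.
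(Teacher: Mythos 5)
Your proposal is correct and follows essentially the same route as the paper: split on whether $\cov(\sigma^-_{\max}(\gamma_z^{m_z}))=1$, use the filtration of Theorem~\ref{stratification_symplectization} for the easy case, and otherwise cut out the zero sets of the second-term evaluation maps $f_\mu,g_\mu$ after proving their submersivity by the same Hahn--Banach/injective-point argument as Theorem~\ref{regularity of evaluation map}, combined with Sard--Smale and the Taubes trick. You spell out the choice of weight and the analysis on the constrained strata a bit more explicitly than the paper, but the underlying argument is identical.
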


\begin{proof}

The proof can be divided into two parts. Let's start with a simpler case where $\overline{\sigma}(\gamma_z^{m_z})=\cov(\sigma_{\max}^{-}(\gamma_z^{m_z}))=1$.
In this case, the submanifold $\cM^z_{\lambda_1}(\widehat{W},J)\setminus \cM^z_{\lambda_2}(\widehat{W},J)$ is an open dense subset of the moduli space $\cM^*(\widehat{W},J)$ and $\Delta_2=0$ for every pseudoholomorphic curve $\tilde{u}$ in $\cM^z_{\lambda_1}(\widehat{W},J)\setminus \cM^z_{\lambda_2}(\widehat{W},J)$.\\

Now, let's consider the case where $\cov(\sigma_{\max}^{-}(\gamma_z^{m_z}))\neq 1$. In this case, the submanifold $\cM^z_{\lambda_1}(\widehat{W},J)\setminus \cM^z_{\lambda_2}(\widehat{W},J)$ still has curves with $\Delta_2\neq0$. But, the pseudoholomorphic curves in the open subset $P^z_{\lambda_1}$ as described above have $\Delta_2=0$. Thus it would be sufficient to prove that the maps $f_{\mu}$'s and $g_{\mu}$'s are regular as this would mean the submanifolds $f_{\mu}^{-1}(0)$ and $g_{\mu}^{-1}(0)$ have positive codimension. The regularity is proved exactly the same way as Theorem~\ref{regularity of evaluation map}.

\end{proof}

\subsection{Positivity of relative asymptotic self intersection index}
Similar to the previous section, we would like to characterize the subset where the relative asymptotic self intersection index vanishes. We have punctures $z,w \in \Gamma$ at which the pseudoholomorphic curves in $\cM^*(\widehat{W},J)$ are positively asymptotic to the same Reeb orbit $\gamma \coloneqq \gamma_z=\gamma_w$. Assuming that $J\in \cJ_{\mathrm{reg},z}\bigcap \cJ_{\mathrm{reg},w}$ as described in Theorem~\ref{stratification}, we obtain the following filtrations

$$\cM^*(\widehat{W},J) = \cM^z_{\lambda_1}(\widehat{W},J) \supset \cM^z_{\lambda_2}(\widehat{W},J) \supset \cM^z_{\lambda_3}(\widehat{W},J) \dots $$
$$\cM^*(\widehat{W},J) = \cM^w_{\mu_1}(\widehat{W},J) \supset \cM^z_{\mu_2}(\widehat{W},J) \supset \cM^z_{\mu_3}(\widehat{W},J) \dots $$
of the moduli space $\cM^*(\widehat{W},J)$ for $0>\lambda_1> \lambda_2> \dots $(resp. $0>\mu_1> \mu_2> \dots $) the negative eigenvalues of $A_z$(resp. $A_w$).\\

We will provide a description of subsets with vanishing relative self-intersection index in two cases: 
\begin{itemize}
    \item[Case 1:]  Let $m_w\cdot \sigma^-_{\max}(\gamma_z^{m_z}) \neq m_z \cdot \sigma^-_{\max}(\gamma_w^{m_w})$. In this case, it follows directly from Lemma~\ref{self-intersction=0} that every pseudoholomorphic curve $\tilde{u}$ in
    $$\cM^*(\widehat{W},J)\setminus \left(\cM^z_{\lambda_2}(\widehat{W},J) \cup \cM^w_{\mu_2}(\widehat{W},J)\right)$$
    has $\delta_{\infty}([\tilde{u}; z],[\tilde{u};w])=0$.
    \item[Case 2:] Let $m_w\cdot \sigma^-_{\max}(\gamma_z^{m_z}) = m_z \cdot \sigma^-_{\max}(\gamma_w^{m_w}) \eqqcolon \nu$. Here, we define a map $h_i$ for each $i$ satisfying $1\leq i < m_z\cdot m_w$, 
    $$h_i: \cM^*(\widehat{W},J)\setminus \left(\cM^z_{\lambda_2}(\widehat{W},J) \cup \cM^w_{\mu_2}(\widehat{W},J)\right) \rightarrow E_{\nu}$$
$$\tilde{u} \mapsto (e^{U}(m_w\cdot t) - e^{V}(m_z\cdot t + \frac{i}{m_z\cdot m_w}))$$
where $e^U$ and $e^V$ are the eigenvectors appearing in the expansion of the asymptotic representatives(as in  Theorem~\ref{asymp_rep1}) of $[\tilde{u};z]$ and $[\tilde{u};w]$ respectively. Then every pseudoholomorphic curve in 
$$\left( \cM^*(\widehat{W},J)\setminus \left(\cM^z_{\lambda_2}(\widehat{W},J) \cup \cM^w_{\mu_2}(\widehat{W},J)\right) \right) \setminus (\bigcup\limits_{i=1}^{m_z\cdot m_w}h_i^{-1}(0))$$
has $\delta_{\infty}([\tilde{u}; z],[\tilde{u};w])=0$. 
\end{itemize}
This detailed description allows us to present a theorem following the same approach as Theorem~\ref{delta1=0}.

\begin{theorem} \label{delta_rel=0}
For any pair $z,w\in \Gamma_+$, there exist a comeagre subset of $\cJ_O$ such that for every $J$ in that comeagre subset, the subset of pseudoholomorphic curves in $\cM^*(\widehat{W},J)$ with $\delta_{\infty}([\tilde{u}; z], [\tilde{u};w])=0$ contains an open dense subset.

\end{theorem}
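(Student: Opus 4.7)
The plan is to follow the same two-step template used for Theorems~\ref{delta1=0} and \ref{delta2=0}: first identify an explicit open subset on which the relevant asymptotic quantity vanishes, and then use the Sard--Smale argument together with the stratification of Theorem~\ref{stratification} to show that the complement of this subset is meagre (more precisely, is the zero set of a submersive evaluation map into a finite-dimensional eigenspace). Start by choosing $J$ lying in the intersection $\cJ_{\mathrm{reg},z}\cap \cJ_{\mathrm{reg},w}$ of the comeagre sets produced by Theorem~\ref{stratification}, which is still comeagre, and fix the two filtrations of $\cM^*(\widehat{W},J)$ associated to the punctures $z$ and $w$.

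In Case~1, where $m_w\cdot\sigma^-_{\max}(\gamma_z^{m_z})\neq m_z\cdot\sigma^-_{\max}(\gamma_w^{m_w})$, Lemma~\ref{relasymp_nonzero} guarantees that the positivity term in $\delta_\infty$ can only appear if the two leading eigenvectors at $z$ and $w$ are related by the $\mathbb{Z}_m$-action on $\gamma^*\xi^H$, which in particular forces their eigenvalues to line up after reparametrization. Hence on the open subset $\cM^*(\widehat{W},J)\setminus(\cM^z_{\lambda_2}(\widehat{W},J)\cup \cM^w_{\mu_2}(\widehat{W},J))$, the relative asymptotic self-intersection index already vanishes, and openness and density follow from Theorem~\ref{stratification} since $\cM^z_{\lambda_2}$ and $\cM^w_{\mu_2}$ are submanifolds of positive codimension.

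The substantive part is Case~2, where $m_w\cdot\sigma^-_{\max}(\gamma_z^{m_z})=m_z\cdot\sigma^-_{\max}(\gamma_w^{m_w})=\nu$. Here I would work on the restricted universal moduli space
\[
\cN(\widehat{W},\cJ^{\epsilon}_U)\coloneqq \bigl(\cM^*(\widehat{W},\cJ^{\epsilon}_U)\setminus(\cM^z_{\lambda_2}(\widehat{W},\cJ^{\epsilon}_U)\cup \cM^w_{\mu_2}(\widehat{W},\cJ^{\epsilon}_U))\bigr)
\]
and show that each of the finitely many maps
\[
h_i\colon \cN(\widehat{W},\cJ^{\epsilon}_U)\longrightarrow E_{\nu},\qquad 1\le i<m_zm_w,
\]
is a smooth submersion. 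Smoothness follows from the explicit formulas for leading eigenvectors in \cite[Lemma~3.1]{Siefinter2008}, and the key submersion statement can be proved by the same Cauchy--Riemann-plus-perturbation argument that gave Theorem~\ref{regularity of evaluation map}: the difference of the two asymptotic expansions is, in a trivialization of $\gamma^*\xi^H$, governed by the asymptotic operator $A_{\gamma^{m_zm_w},J}$, and Lemma~\ref{local_existence} adapted to the end pair allows any prescribed $\nu$-eigenvector to be realized by a local infinitesimal deformation supported near the punctures. Once the $h_i$ are submersions on $\cN(\widehat{W},\cJ^{\epsilon}_U)$, the Sard--Smale theorem together with Taubes's trick (as invoked after Theorem~\ref{regularity of evaluation map}) yields a comeagre set $\cJ_{\mathrm{reg},z,w}\subset\cJ_U$ such that each $h_i^{-1}(0)$ is a submanifold of positive codimension in $\cN(\widehat{W},J)$. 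Intersecting over the finitely many indices $i$, the complement
\[
\cN(\widehat{W},J)\setminus \bigcup_{i=1}^{m_zm_w-1}h_i^{-1}(0)
\]
is open and dense in $\cM^*(\widehat{W},J)$, and every curve in it has $\delta_\infty([\tilde{u};z],[\tilde{u};w])=0$ by the characterization of positivity in Lemma~\ref{relasymp_nonzero}.

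The main obstacle I expect is the submersion step for the $h_i$ in Case~2: unlike the single-puncture evaluation maps, $h_i$ depends on the leading eigenvectors at two different ends simultaneously, so one has to ensure that a local perturbation realizing a prescribed value in $E_\nu$ can be arranged without inadvertently modifying the cancellation that defines $h_i$ on the other end. The natural way around this is to choose the two ends' cylindrical neighborhoods to be disjoint and to perform the Lemma~\ref{local_existence} construction in the neighborhood of $z$ only, using the injective-point hypothesis for $\cM^*$ to control the global $Y$-perturbation as in the proof of Theorem~\ref{regularity of evaluation map}; the vector in $E_\nu$ is then matched by adjusting the leading coefficient at $z$ while holding the expansion at $w$ fixed, after which linearity of the map $h_i$ in the two leading eigenvectors finishes the argument.
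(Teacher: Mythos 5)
Your proposal follows the same route as the paper: the same two-case split according to whether $m_w\cdot\sigma^-_{\max}(\gamma_z^{m_z})$ equals $m_z\cdot\sigma^-_{\max}(\gamma_w^{m_w})$, the same open-dense candidate set in Case~1, and in Case~2 the same finitely many maps $h_i$ whose regularity is reduced to the argument of Theorem~\ref{regularity of evaluation map}. The one place where your filling-in of the paper's ``minor modifications'' is imprecise is the resolution of your ``main obstacle'': you propose to adjust only the leading coefficient at $z$ while holding the expansion at $w$ fixed. The derivative of $h_i$ then hits only the image of the pullback $E_{\lambda_1,z}\hookrightarrow E_\nu$, and this can be a proper subspace of the relevant target (for example, when $\dim E_{\lambda_1,z}=1$ but $\dim E_{\mu_1,w}=2$, so that the pullback of $E_{\mu_1,w}$ is all of $E_\nu$), in which case you do not get a submersion. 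The correct and simpler move is to run the Lemma~\ref{local_existence} construction at \emph{both} ends with disjoint cylindrical supports, which is unproblematic precisely because the worry you raise does not occur: the global corrector $\eta$ lies in a weighted space with weight chosen to decay strictly faster than $\mu_1$ near $w$ (and than $\lambda_1$ near $z$), so the perturbation supported near one puncture genuinely leaves the leading asymptotic coefficient at the other puncture untouched. With both ends controlled, the derivative of $h_i$ surjects onto $E_\nu^z+E_\nu^w$ and the rest of your argument goes through as written.
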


\begin{proof}
The proof can be divided into two parts. Let's start with a simpler case where 
$$m_w\cdot \sigma^-_{\max}(\gamma_z^{m_z}) \neq m_z \cdot \sigma^-_{\max}(\gamma_w^{m_w}).$$
In this case, the submanifold $\cM^*(\widehat{W},J)\setminus \left(\cM^z_{\lambda_2}(\widehat{W},J) \cup \cM^w_{\mu_2}(\widehat{W},J)\right)$ is an open dense subset of the moduli space $\cM^*(\widehat{W},J)$ and every pseudoholomorphic curve in it has $\delta_{\infty}([\tilde{u}; z], [\tilde{u};w])=0$.\\

Now, let's consider the case where $m_w\cdot \sigma^-_{\max}(\gamma_z^{m_z}) = m_z \cdot \sigma^-_{\max}(\gamma_w^{m_w})$. In this case, the submanifold $\cM^*(\widehat{W},J)\setminus \left(\cM^z_{\lambda_2}(\widehat{W},J) \cup \cM^w_{\mu_2}(\widehat{W},J)\right)$ might still contain curves with $\delta_{\infty}([\tilde{u}; z], [\tilde{u};w])\neq0$. But, the pseudoholomorphic curves in the open subset 
$$\left( \cM^*(\widehat{W},J)\setminus \left(\cM^z_{\lambda_2}(\widehat{W},J) \cup \cM^w_{\mu_2}(\widehat{W},J)\right) \right) \setminus (\bigcup\limits_{i=1}^{m_z\cdot m_w}h_i^{-1}(0))$$
have $\delta_{\infty}([\tilde{u}; z], [\tilde{u};w])=0$. Thus it would be sufficient to prove that the maps $h_{i}$'s are regular as this would mean the submanifolds $h_{i}^{-1}(0)$ have positive codimension. This regularity is proved the same way as Theorem~\ref{regularity of evaluation map} with minor modifications to the proof.
\end{proof}

We now state the main theorems of this paper combining Theorems~\ref{delta1=0}, \ref{delta2=0} and \ref{delta_rel=0}. 

\begin{theorem}\label{maintheorem1}
There exists a comeagre subset $\cJ_{\mathrm{reg}} \subset \cJ_O$ such that for every $J \in \cJ_{\mathrm{reg}}$ the subspace of the moduli space $\cM^*(\widehat{W},J)$ consisting of the curves $\tilde{u}$ whose asymptotic self intersection index $\delta_{\infty}(\tilde{u})$ vanishes contains an an open dense subset.
\end{theorem}

From the proof of Theorem~\ref{delta_rel=0} and the definition of the intersection number of punctured pseudoholomorphic curves given in Section~\ref{intersection number and singularity index}, we can get the following theorem.

\begin{theorem}\label{maintheorem2}
There exists a comeagre subset $\cJ_{\mathrm{reg}} \subset \cJ_O$ such that for every $J\in \cJ_{\mathrm{reg}}$, the subset of the moduli space $\cM^*(\widehat{W},J) \times \cM^*(\widehat{W},J) $ consisting of the curves $(\tilde{u},\tilde{v})$ whose asymptotic contribution to the intersection $\tilde{u}*\tilde{v}$ vanishes contains an open dense subset.
\end{theorem}

The above theorem still holds for curves in the product of moduli spaces defined on potentially two different collections of non-degenerate Reeb orbits, homology classes etc.\\

It is crucial to emphasize that the proofs and characterizations provided in the Theorems~\ref{delta1=0}, \ref{delta2=0} and \ref{delta_rel=0} hold true for the moduli space $\cM^*(M,J)$ as well, resulting in the following theorems. As before, we assume that the complement of the open subset $O\subset M$ contain the Reeb orbits involved in the definition of the moduli space.

\begin{theorem}\label{maintheorem3}
There exists a comeagre subset $\cJ_{\mathrm{reg}} \subset \cJ_O$ such that for every $J \in \cJ_{\mathrm{reg}}$ the subspace of the moduli space $\cM^*(M,J)$ consisting of the curves $\tilde{u}$ whose asymptotic self intersection index $\delta_{\infty}(\tilde{u})$ vanishes contains an an open dense subset.
\end{theorem}

\begin{theorem}\label{maintheorem4}
There exists a comeagre subset $\cJ_{\mathrm{reg}} \subset \cJ_O$ such that for every $J \in \cJ_{\mathrm{reg}}$, the subspace of the moduli space $\cM^*(M,J) \times \cM^*(M,J) $ containing the curves $(\tilde{u},\tilde{v})$ whose asymptotic contribution to the intersection $\tilde{u}*\tilde{v}$ vanishes contains an an open dense subset.
\end{theorem}

As before, the above theorem still holds for curves in the product of moduli spaces defined on potentially two different collections of non-degenerate Reeb orbits, homology classes etc.

\printbibliography
\end{document}